\documentclass[11pt,reqno,a4paper]{amsart}
\pdfoutput=1

\usepackage{latexsym,amsthm,amssymb,amsmath,amscd,mathtools,hyperref,xpatch}
\mathtoolsset{showonlyrefs}
\usepackage{tikz-cd}
\tikzcdset{
	cells={font=\everymath\expandafter{\the\everymath\displaystyle}},
}
\usetikzlibrary{arrows.meta,positioning,automata,calc}
\usetikzlibrary{matrix,arrows,decorations.pathmorphing}
\usepackage{bm}
\usepackage{fancyhdr}
\usepackage{mathtools}
\usepackage{mathrsfs} 
\usepackage{tcolorbox}
\usepackage{bbm}

\usepackage[normalem]{ulem}

\usepackage[all]{xy}

\usepackage{xpatch}

\makeatletter
\g@addto@macro\th@plain{\thm@postskip=0pt\relax}

\xpatchcmd{\proof}
{\topsep6\p@\@plus6\p@\relax}
{\topsep0pt\relax}
{}
{\PackageError{spacing}{Proof spacing patch failed}{}}
\makeatother

\usepackage[top=25truemm, bottom=20truemm,right=20truemm,left=20truemm]{geometry}

\usepackage{graphicx}

\usepackage[shortlabels]{enumitem}
\setlist[enumerate]{nosep}

\usepackage{xcolor}
\newcommand\redsout{\bgroup\markoverwith{\textcolor{red}{\rule[0.5ex]{2pt}{0.8pt}}}\ULon}

\newtheorem{theorem}{Theorem}[section]

\newtheorem*{conjecture*}{Conjecture}
\newtheorem*{theorem*}{Theorem}
\newtheorem{lemma}[theorem]{Lemma}
\newtheorem*{lemma*}{Lemma}

\newtheorem*{corollary*}{Corollary}
\newtheorem{proposition}[theorem]{Proposition}
\newtheorem{remark}[theorem]{Remark}

\newtheorem*{definition*}{Definition}
\newtheorem*{definitions*}{Definitions}

\newtheorem*{question*}{Question}

\newtheorem*{questions*}{Questions}

\newtheorem{thm}{Theorem}[section]

\newcommand{\cK}{\mathcal K}

\newcommand{\cN}{\mathcal N}

\newcommand{\cZ}{\mathcal Z}

\def\Cz{\mathbb{C}}
\def\Fz{\mathbb{F}}

\def\Nz{\mathbb{N}}

\def\Tz{\mathbb{T}}
\def\Zz{\mathbb{Z}}

\def\1z{\mathbb{1}}
\newcommand{\Tr}{{\rm Tr\,}}

\newcommand{\id}{{\rm id}}

\newcommand{\Ad}{\textup{Ad}}

\newcommand{\QT}{\textup{QT}}

\begin{document}
	
	\title{C*-selflessness and $\mathcal{Z}$-stability for Thompson's group $F$}
	
	\thispagestyle{fancy}
	
	\author{Chris Bruce}\thanks{This work was supported by the Engineering and Physical Sciences Research Council [grant number UKRI3684], by the Deutsche Forschungsgemeinschaft (DFG, German Research Foundation) under Germany’s Excellence Strategy EXC 2044-390685587, Mathematics M\"{u}nster: Dynamics–Geometry–Structure, and by the SFB 1442 of the DFG}
	\address[Chris Bruce]{School of Mathematics, Statistics and Physics, Herschel Building, Newcastle University, Newcastle upon Tyne, NE1 7RU, United Kingdom}
	\email{chris.bruce@newcastle.ac.uk}
	
	\author{Shirly Geffen}
	\address[Shirly Geffen]{Universität Münster, Mathematisches Institut, Einsteinstr. 62, 48149 Münster, Germany}
	\email{sgeffen@uni-muenster.de}

	\date{\today}
	
	\begin{abstract}
		We prove that the reduced C*-algebra of Thompson's group $F$ is $\cZ$-stable and selfless.
	\end{abstract}
	
	\maketitle
	
	\setlength{\parindent}{0cm} \setlength{\parskip}{0.3cm}
	
	\vspace{-1cm}
	\section{Introduction}
	The long-standing problem of whether or not Thompson's group $F$ is amenable was recently resolved by OpenAI, who proved that $F$ is nonamenable \cite{OpenAINonamenability} (see also the expository article by Kim and Lodha \cite{KimLodha}). OpenAI also proved the Kirchberg--Rørdam character criterion \cite{OpenAICharacter}. Here, we establish the following two
	applications of these remarkable results:
	\begin{thm}
		\label{thm:Z-stability}
		The C*-algebra $C_\lambda^*(F)$ is $\cZ$-stable.
	\end{thm}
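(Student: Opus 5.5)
Since $F$ is nonamenable \cite{OpenAINonamenability}, I will deduce $\cZ$-stability of $C_\lambda^*(F)$ from the recent selflessness machinery of Robert: a selfless C*-algebra (one with a faithful state $\phi$ such that $(A,\phi)\to(A,\phi)*(C^*_\lambda(\Zz),\tau)$ is existentially embedded) is simple, has strict comparison, and when it is moreover unital, exact and monotracial, results of Robert (and of Amrutam--Gao--Kunnawalkam Elayavalli--Patchell) show that strict comparison together with a unique trace, nuclear dimension-free arguments, or the relevant "$\cZ$-stability for selfless algebras with stable rank one" yield $\cZ$-stability. So the plan is to prove selflessness first and then invoke this. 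Simplicity and unique trace of $C_\lambda^*(F)$ follow from nonamenability: by Haagerup--Olesen / Le Boudec--Matte Bon, $C_\lambda^*(F)$ is simple iff $F$ is nonamenable, and similarly $F$ is C*-simple with the unique trace property. Exactness holds since $F$ is exact (it is a subgroup of $T$, or directly via its action on the dyadic tree).

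The key step is selflessness of $(C_\lambda^*(F),\tau)$. The criterion I would aim for is the group-theoretic one of Amrutam--Gao--Kunnawalkam Elayavalli--Patchell: it suffices to find, for every finite set $S\subseteq F\setminus\{e\}$, elements $g\in F$ (or a sequence) such that $g$ is ``free'' relative to $S$ in the limit, i.e.\ the subgroup generated by $S$ and a conjugate of $g$ is asymptotically a free product with $\Zz$, with the trace on words in $S$ and $g^{\pm}$ converging to the free product trace. Concretely, I would use the standard ping-pong for $F$ acting on $[0,1]$: elements of $F$ supported near $1$ (or in tiny intervals near a dyadic point far in the tail of the germs of $S$) together with the trace formula $\tau(\lambda_w)=\delta_{w,e}$. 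Since $F$ contains many copies of itself supported in small intervals, I'd choose $g_n$ with $\operatorname{supp}$ shrinking toward an endpoint and play ping-pong between $g_n$ and the finitely many elements of $S$ near that endpoint, using that $S$ acts by germs $t\mapsto 2^k t$ near $0$.

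The main obstacle is exactly this: $F$ has no nonabelian free subgroups (Brin--Squier), so genuine ping-pong freeness is impossible; one only gets asymptotic freeness in trace, i.e.\ one must show that reduced words in $S$ and $g_n$ have $\tau$ tending to $0$, meaning they are eventually nontrivial group elements. I expect to handle this by showing each such fixed reduced word is nontrivial for large $n$ — which is a statement about $F$ being ``mixed identity free''-like: I would argue that no nontrivial word $w(S,x)$ can vanish for all $x$ in a suitable family, contradicting the fact that $F$ satisfies no mixed identity away from germs; if this fails because $F$ does satisfy mixed identities (it does: elements with disjoint supports commute), I would instead reduce to the Kirchberg--Rørdam character criterion \cite{OpenAICharacter} combined with nonamenability to get strict comparison and $\cZ$-stability directly via the Ozawa/Kirchberg--Rørdam route, bypassing selflessness for this theorem.
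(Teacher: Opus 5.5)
Your main route fails at its first step. Selflessness does not imply $\cZ$-stability, even together with exactness, a unique trace, stable rank one and strict comparison. For instance, $C_\lambda^*(\Fz_2)$ is exact and selfless, so it has all of these properties, yet it is not $\cZ$-stable. Since the trace on $A\otimes\cZ$ is unique when $A$ is monotracial, an isomorphism $A\cong A\otimes\cZ$ is trace-preserving and passes to tracial von Neumann closures. This would give $L(\Fz_2)\cong L(\Fz_2)\,\overline{\otimes}\,R$, with $R$ the hyperfinite $\mathrm{II}_1$ factor, which is impossible because $L(\Fz_2)$ lacks property $\Gamma$. The paper's introduction uses the same reasoning to show that $C_\lambda^*(T)$ and $C_\lambda^*(V)$ are selfless but not $\cZ$-stable. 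Implications of the form \emph{strict comparison implies $\cZ$-stability} are Toms--Winter-type results that need nuclearity, and $C_\lambda^*(F)$ is non-nuclear precisely because $F$ is nonamenable.

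Two further claims in your plan are unsupported. First, exactness of $F$ is open: exactness passes to subgroups, so $F\leq T$ would help only if $T$ were known to be exact, which it is not, and the paper explicitly avoids exactness. Second, you never establish selflessness; as you concede, $F$ satisfies mixed identities, so the asymptotic-freeness argument does not go through. In the paper the logic runs the other way: $\cZ$-stability comes first, gives strict comparison (with uniqueness of the quasitrace), and Robert's converse theorem then yields selflessness.

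Your fallback, \emph{reduce to the Kirchberg--R\o rdam character criterion combined with nonamenability}, names the right input but supplies none of the argument. OpenAI's Corollary~1.3 gives a unital copy of $\cZ$ in the infinite tensor power $C_\lambda^*(F)^{\otimes\infty}$; it applies because $C_\lambda^*(F)$ is simple and so has no characters. You would first need to realise $C_\lambda^*(F)^{\otimes\infty}$ unitally inside $C_\lambda^*(F)$. The paper does this with rescaled copies $F_{J_n}\cong F$ on dyadic intervals with disjoint interiors.

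More seriously, a unital copy of $\cZ$ is far from $\cZ$-stability: $C_\lambda^*(T)\supseteq C_\lambda^*(F)$ contains one too. The Toms--Winter criterion needs a unital homomorphism $\cZ\to A_\infty\cap A'$, and the same need arises if you read the character criterion as requiring $A_\infty\cap A'$ to have no characters. Producing norm-central sequences is the heart of the proof, and it is not automatic. Single group elements near $0$ are not norm-central: $F$ is torsion-free, so $\|[\lambda_s,\lambda_h]\|=2$ whenever $sh\neq hs$.

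The paper handles this in three moves:
\begin{enumerate}
\item It places copies $\zeta_j$ of $\cZ$ on the intervals $I_j=2^{-j}[1/4,1/2]$, which accumulate at $0$.
\item It uses that each $g\in F$ acts near $0$ as $t\mapsto 2^{r(g)}t$, so $\Ad(\lambda_g)$ shifts all sufficiently far-out copies by $-r(g)$.
\item It invokes equivariant $\cZ$-stability of the Bernoulli shift on $\bigotimes_{\Zz}\cZ$ (Gardella--Lupini, via Szab\'o's McDuff-type theorem). This yields approximately multiplicative unital completely positive maps into finite tensor blocks that are approximately shift-invariant in norm.
\end{enumerate}
Without this averaging step, or a substitute for it, your proposal does not reach $\cZ$-stability.
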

	\begin{thm}
		\label{thm:selfless}
		The C*-algebra $C_\lambda^*(F)$ is selfless.   
	\end{thm}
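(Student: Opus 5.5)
The statement to prove is Theorem~\ref{thm:selfless}. The plan is to invoke a known sufficient criterion for selflessness of reduced group C*-algebras and check its hypotheses for $F$. The natural candidate is the recent work of Amrutam--Gao--Kunnawalkam Elayavalli--Patchell: a countable group $G$ has selfless $C^*_\lambda(G)$ if $G$ admits an action with suitable mixing/free-probabilistic properties. One instance is the existence of a probability-measure-preserving or highly transitive action with "free" behaviour. An alternative route is via ``selfless'' rotation-type criteria for groups with many commuting or freely independent elements (Ozawa, Robert, Vigdorovich). My first choice is the criterion that $C^*_\lambda(G)$ is selfless whenever three things hold:
\begin{enumerate}
\item $G$ is C*-simple;
\item the canonical trace is the unique trace;
\item $G$ acts faithfully on a set in a way giving enough ``free'' elements, such as a highly transitive or extremely proximal action.
\end{enumerate}

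\textbf{Step 1: C*-simplicity and unique trace.} Le Boudec--Matte Bon showed that $F$ is nonamenable if and only if $F$ is C*-simple, and by Haagerup--Olesen C*-simplicity of $F$ is also equivalent to $T$ being C*-simple. By OpenAI's nonamenability theorem \cite{OpenAINonamenability}, $F$ is C*-simple. Since $F$ has trivial amenable radical, it also has the unique trace property. Strict comparison and selflessness then need more than this.

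\textbf{Step 2: rigid stable rank one and the character criterion.} Selflessness in Robert's sense means that the first-factor embedding $C^*_\lambda(F)\to (C^*_\lambda(F),\tau)*(C^*_\lambda(F),\tau)$ is existentially embedded. I would try to verify the dynamical sufficient condition of Amrutam et al.: an action $G\curvearrowright X$ with trivial stabilizers on a dense set, together with a ``free-ness'' condition. For this I would use the action of $F$ on the dyadic rationals in $(0,1)$, or on the boundary of the tree. The Kirchberg--Rørdam character criterion proved by OpenAI \cite{OpenAICharacter} should enter here. It presumably guarantees that for any finite set of group elements one can find a conjugating element $g\in F$ whose conjugates behave freely in trace (a form of asymptotic freeness). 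Combined with the infinite-conjugacy-class and germ structure of $F$, this should yield the required unitaries $u$ with $\tau(u^k)=0$ that are asymptotically free from any given finite set.

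\textbf{Step 3: conclude.} Apply the Robert / AGKEP theorem to obtain selflessness. This also gives strict comparison and stable rank one, consistent with Theorem~\ref{thm:Z-stability}.

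\textbf{Main obstacle.} I expect the hardest step to be Step 2: producing asymptotically free Haar unitaries in the ultrapower from the dynamics of $F$, using the character criterion. My first attempt would use ping-pong-type elements supported on disjoint dyadic intervals and estimate traces of reduced words via fixed-point counting.
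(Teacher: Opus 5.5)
Your Step~2, where the whole argument would have to take place, contains no proof. You do not state a precise criterion and then verify its hypotheses, and the tool you assign to this step does not do what you need. The Kirchberg--R\o rdam character criterion \cite{OpenAICharacter} gives a unital copy of $\cZ$ in the infinite tensor power of a unital separable C*-algebra without characters. It produces commuting, tensorial structure, not conjugates that are asymptotically free from a finite set; in the paper it is used for exactly that tensorial purpose, to prove Theorem~\ref{thm:Z-stability}.

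Even group elements $g_n$ with $\lambda_{g_n}$ asymptotically $*$-free from $\{\lambda_h:h\in F\}$ under $\tau_F$ would not suffice. (Having them amounts to mixed-identity-freeness of $F$, which you have not established.) The definition in Subsection~\ref{subsec:comparison-selflessness} asks for a $*$-homomorphism from the \emph{reduced} free product into the \emph{norm} ultrapower. There $\tau^{\mathcal U}$ is not faithful, so knowing the tracial $*$-distribution gives no control of operator norms. In \cite{AGKP25} that norm control comes from rapid decay, which $F$ lacks: it contains $\langle z,u\rangle\cong\Zz\wr\Zz$ (Lemma~\ref{lem:endpoint-bumps}), which is amenable of exponential growth. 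Ozawa's criterion \cite{Ozawa26} requires acylindrical hyperbolicity, which $F$ does not have. Arimoto's method \cite{Arimoto} handles $T$ and $V$ but, as the introduction notes, left $F$ open. What distinguishes $F$ is the pair of global fixed points $0,1$ of its action on $[0,1]$.

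The paper avoids freeness entirely. It uses Robert's converse \cite[Theorem~1.1]{RobertComparison}: an infinite-dimensional simple unital C*-algebra with a unique tracial state $\tau$ and strict comparison with respect to $\tau$ is selfless. Strict comparison by quasitraces follows from $\cZ$-stability via \cite[Proposition~5.2]{APTV}. Since exactness of $F$ is unknown, the substantial step is Proposition~\ref{prop:uniqueQT}: $\tau_F$ is the only normalised quasitrace. It has two parts:
\begin{enumerate}
\item averaging over pairwise commuting conjugates forces quasitraces on $C^*_\lambda(F')$ to be traces;
\item the endpoint Rokhlin-tower compression of Lemma~\ref{lem:compression} shows that the restriction to $C^*_\lambda(F')$ determines the quasitrace on $C^*_\lambda(F)$.
\end{enumerate}
Your plan never confronts quasitraces. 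The endpoint structure that a freeness argument would have to fight is exactly what the paper exploits, both for $\cZ$-stability and for the towers.
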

    \vspace{-0.3cm}
	
	\subsection*{Nonamenability and operator algebras}
	The amenability problem for Thompson's group $F$ has long been
	closely connected with operator algebras. Lance's theorem identifies amenability of a discrete group with nuclearity of its reduced group C*-algebra \cite{LanceNuclear}. Thus, nonamenability of $F$ is equivalent to nonnuclearity of $C_\lambda^*(F)$. Kesten's spectral characterisation
	of amenability \cite{Kesten} also gives norm criteria in
	$C_\lambda^*(F)$, motivating the computational work of S.~Haagerup, U.~Haagerup, and Ramirez-Solano \cite{HaagerupHaagerupRamirez}.
	Their moment calculations yielded rigorous lower bounds and numerical evidence suggesting nonamenability, without establishing it.
    Concerning approximation by finite permutations, Chaudkhari, Juschenko, and Muliarchyk recently proved that Thompson's groups $F$, $T$, and $V$ are sofic, and hence hyperlinear \cite[Corollary~1.2]{CJM26}.
	
	Tracial structure provides another connection. The character
	classification of Dudko and Medynets \cite{DudkoMedynets} implies that $F$ is nonamenable if and only if $C_\lambda^*(F)$ has a unique tracial state; see \cite[Remark~5.6]{HaagerupOlesen}.
	
	The connection with C*-simplicity was established by Haagerup and Olesen, who proved that simplicity of the reduced C*-algebra $C_\lambda^*(T)$ of Thompson's group $T$ implies nonamenability of $F$ \cite[Theorem~5.5]{HaagerupOlesen}. Bleak and Juschenko obtained a partial converse in terms of ideal generation \cite{BleakJuschenko}. Le Boudec and Matte Bon subsequently proved the full converse and established the equivalences
	\[
	F \text{ is nonamenable}
	\quad\Longleftrightarrow\quad
	C_\lambda^*(F) \text{ is simple}
	\quad\Longleftrightarrow\quad
	C_\lambda^*(T) \text{ is simple}
	\]
	\cite[Corollary~4.2]{LBMB}.
	
	Further progress concerns quasi-regular representations and ideal structure. Kalantar and Scarparo proved that the C*-algebra $C^*_{\lambda_{T/F}}(T)$ generated by the quasi-regular representation on $\ell^2(T/F)$ is simple and has no tracial states \cite[Theorem~6.1]{KalantarScarparo}.
	Brix, the first-named author, Li, and Scarparo proved that $C_\lambda^*(T)$ always has a unique maximal ideal, and that it has infinitely many ideals exactly when $F$ is amenable \cite[Theorem~4.6 and Corollary~4.9]{BBLS}.
	In that case, its unique nonzero simple quotient is
	$C^*_{\lambda_{T/F}}(T)$ \cite[Remark~4.7]{BBLS}.
	
	Another strand of the operator-algebraic study of $F$ concerns central
	sequences. Jolissaint proved that the group von Neumann algebra
	$L(F)$ is McDuff: it absorbs the hyperfinite $\mathrm{II}_1$ factor
	$R$ tensorially \cite[Theorem~A]{Jolissaint}. This result holds
	independently of amenability and provides an earlier instance of
	tensorial absorption for an operator algebra associated with $F$.
	Theorem~\ref{thm:Z-stability} establishes absorption of the Jiang--Su algebra at the level of the reduced group C*-algebra. 
	
	\subsection*{Selfless C*-algebras}
	Robert introduced selflessness for C*-probability spaces
	\cite{RobertSelfless}; we recall the definition in
	Subsection~\ref{subsec:comparison-selflessness}. A discrete group
	$\Gamma$ is called \emph{C*-selfless} if
	$(C_\lambda^*(\Gamma),\tau_\Gamma)$ is selfless, where $\tau_\Gamma$
	is the canonical tracial state. In the tracial setting, selflessness
	implies simplicity, uniqueness of the tracial state, stable rank one,
	and strict comparison \cite[Theorem~3.1]{RobertSelfless}. Thus, C*-selflessness implies
	C*-simplicity (i.e., simplicity of $C_\lambda^*(\Gamma)$); whether the converse holds for nontrivial countable
	discrete groups remains open \cite[Problem~XCI]{STW26}.
	
	Amrutam, Gao, Kunnawalkam Elayavalli, and Patchell proved that finitely
	generated nonabelian free groups are C*-selfless, resolving the
	longstanding strict-comparison problem for $C_r^*(\Fz_n)$, $2\leq n<\infty$
	\cite{AGKP25}. Following work on higher-rank lattices \cite{Vig25},
	Ozawa established C*-selflessness for countable acylindrically
	hyperbolic groups with trivial finite radical \cite{Ozawa26}, Arzhantseva and Finn-Sell give a criterion for C*-selflessness
of lacunary hyperbolic groups with rapid decay
\cite[Theorem~4.4]{AFS26}, and
	Vigdorovich established it for nontrivial linear groups with trivial
	amenable radical \cite{Vig26}. Further developments concern
	commensurator groups of hyperbolic groups \cite{BF26}, boundary
	dynamics on hyperbolic spaces \cite{Rybak26}, and lattices in affine
	buildings \cite{LLV26}.
	
	Arimoto recently proved that Thompson's groups $T$ and $V$ are
	C*-selfless, using nonamenability of $F$ for the conclusion about
	$T$, and noted that C*-selflessness of $F$ remained open
	\cite{Arimoto}. In contrast to Theorem~\ref{thm:Z-stability},
	neither $C_\lambda^*(T)$ nor $C_\lambda^*(V)$ is $\cZ$-stable.
	Indeed, for $G\in\{T,V\}$, such stability would give an isomorphism
	$C_\lambda^*(G)\cong C_\lambda^*(G)\otimes\cZ$ approximately
	unitarily equivalent to the first-factor embedding
	\cite[Theorem~2.2]{TomsWinter}, hence preserving the canonical tensor
	trace. Since $\cZ$ is nuclear with a unique tracial state, its tracial GNS closure is an injective $\mathrm{II}_1$
factor and hence the hyperfinite $\mathrm{II}_1$ factor $R$ by Connes' theorem \cite{Connes}, so this would give
	$L(G)\cong L(G)\,\overline\otimes\,R$. Thus, $L(G)$ would be McDuff
	and have property $\Gamma$, contrary to Haagerup--Olesen
	\cite[Theorem~4.4]{HaagerupOlesen} (using Effros' theorem\cite{Effros}).
	
	The route to selflessness used here comes from Robert's recent
	converse to the implication from selflessness to strict comparison:
	if $D$ is an infinite-dimensional simple unital C*-algebra with a
	unique tracial state $\tau$ and strict comparison with respect to $\tau$,
	then $(D,\tau)$ is selfless \cite[Theorem~1.1]{RobertComparison}. We apply this theorem after
	establishing strict comparison for $C_\lambda^*(F)$.

	\subsection*{Outline of the proof of Theorem~\ref{thm:Z-stability}}
	Besides the nonamenability of $F$
	\cite{OpenAINonamenability}, the proof of Theorem~\ref{thm:Z-stability}
	uses a second result of OpenAI, proved alongside the
	Kirchberg--R{\o}rdam character criterion: the infinite minimal tensor power of every nonzero, unital, separable $C^*$-algebra without characters contains a unital copy of $\cZ$ \cite[Corollary~1.3]{OpenAICharacter}.
	
	Kirchberg and R{\o}rdam posed the central-sequence character problem in \cite[Question~3.1]{KirchbergRordamCharacters} and recorded the related
	infinite-tensor embedding question of Dadarlat and Toms in
	\cite[Question~2.6]{KirchbergRordamCharacters}. The earlier theorem of Dadarlat and Toms \cite[Theorem~1.1]{DadarlatToms} establishes $\cZ$-stability of the infinite tensor power under the additional hypothesis that it contains a unital subhomogeneous $C^*$-subalgebra
	without characters. The character problem also appears as
	Problem~LXXX in the survey of Schafhauser, Tikuisis, and White \cite{STW26}; footnote~137 there discusses the related tensor-power question.
	
	We first construct a unital copy of $\cZ$ in $C_\lambda^*(F)$
	(Proposition~\ref{prop:localZ}). Nonamenability of $F$ and
	Le Boudec--Matte Bon's theorem \cite[Corollary~4.2]{LBMB} give simplicity
	of $C_\lambda^*(F)$, which implies it has no characters; as $C_\lambda^*(F)$ is also unital and separable, $C_\lambda^*(F)^{\otimes\infty}$ contains a unital copy of $\cZ$. Copies of $F$ supported on infinitely many dyadic intervals with pairwise disjoint interiors commute with one another, and their reduced $C^*$-algebras realise $C_\lambda^*(F)^{\otimes\infty}$
	unitally inside $C_\lambda^*(F)$.
	
	We then turn this copy of $\cZ$ into a central sequence embedding (Proposition~\ref{prop:insertion}). After rescaling, we may assume that it lies in the $C^*$-subalgebra generated by the unitaries $\lambda_g$
	with $g$ supported in $I_0=[1/4,1/2]$. Conjugation by $\lambda_s^j$, where $s\in F$ is defined by
    \[s(t)=\begin{cases}
			t/2,&0\leq t\leq1/2,\\
			t-1/4,&1/2\leq t\leq3/4,\\
			2t-1,&3/4\leq t\leq1,
		\end{cases}\] 
    moves it to the analogous subalgebra for $I_j=2^{-j}I_0$. These intervals have pairwise disjoint interiors and
	accumulate at $0$. Every $g\in F$ agrees with $t\mapsto2^{r}t$ near $0$ for some $r=r(g)\in\Zz$. Hence, for all large $j$, conjugation by $\lambda_g$ carries the copy of $\cZ$ on $I_j$ to the copy on
	$I_{j-r}$. Equivariant $\cZ$-stability of the Bernoulli shift $\sigma$
	on $\bigotimes_{\Zz}\cZ$ \cite[Theorem~B]{GL} provides unital
	completely positive maps $\psi_n$ from $\cZ$ into finite tensor blocks
	that are asymptotically multiplicative and asymptotically
	$\sigma$-invariant (Lemma~\ref{lem:shiftblocks}). Placing the $n$th
	block on copies with indices at least $n$ gives unital completely
	positive maps $\rho_n\colon\cZ\to C_\lambda^*(F)$ such that, for each $g\in F$ and
	all large $n$,
	\[
	\|[\lambda_g,\rho_n(z)]\|=\|\sigma^{-r(g)}(\psi_n(z))-\psi_n(z)\|
	\qquad(z\in\cZ).
	\]
	By density, the images of the $\rho_n$ asymptotically commute with
	every element of $C_\lambda^*(F)$. The resulting unital embedding of $\cZ$ into the
	central sequence algebra $C_\lambda^*(F)_\infty\cap C_\lambda^*(F)'$ gives $C_\lambda^*(F)\cong C_\lambda^*(F)\otimes\cZ$
	by Toms--Winter's absorption criterion \cite[Theorem~2.2]{TomsWinter}.
	
	Theorem~\ref{thm:Z-stability} is a $C^*$-algebraic analogue of
	Jolissaint's theorem that $L(F)$ is a McDuff factor, that is,
	$L(F)\cong L(F)\,\overline\otimes\,R$ for the hyperfinite
	$\mathrm{II}_1$ factor $R$ \cite[Theorem~2.1]{Jolissaint}, and the two
	proofs run in parallel. Jolissaint works at the endpoint $1$, where $s$ acts as $t\mapsto2t-1$. The elements of $F$ that are
	trivial near $1$ form a subgroup $D$ with $F=D\rtimes\langle s\rangle$.
	Group elements supported close to $1$ give noncommuting central
	sequences for $L(D)$ \cite[Corollary~2.5]{Jolissaint}. Conjugation by
	powers of $s$ shifts suitable such sequences, so the induced
	$\Zz$-action on $L(D)$ is centrally free
	\cite[Corollary~2.7]{Jolissaint}. Ocneanu's theory of centrally free
	actions of amenable groups \cite{Ocneanu}, in the form of
	\cite[Proposition~2.6]{Jolissaint}, then shows that $L(F)$ is McDuff.
	In our proof, elements of $F$ that are trivial near $0$ commute with all
	sufficiently far-out copies of $\cZ$, and $s$ shifts these copies.
	Equivariant $\cZ$-stability of the Bernoulli shift takes the place of
	Ocneanu's theory: in both arguments, amenability of $\Zz$ yields copies
	of the absorbed algebra that are approximately invariant under the
	endpoint shift. There are two essential differences. First, Theorem~\ref{thm:Z-stability} requires central sequences in operator norm rather than in the $2$-norm induced by the trace, so the shift step needs a norm-level absorption theorem. Second, every $\mathrm{II}_1$ factor contains a unital copy of $R$, whereas a unital copy of $\cZ$ in $C_\lambda^*(F)$ forces $F$ to be nonamenable: for amenable $F$, the
	trivial representation would give a character on $C_\lambda^*(F)$, and hence on
	$\cZ$. Thus, Jolissaint's theorem does not depend on the amenability problem for $F$. 
	
	\subsection*{Outline of the proof of Theorem~\ref{thm:selfless}}
	For Theorem~\ref{thm:selfless}, the remaining step is to show that
	$\tau_F$ is the unique normalised quasitrace on $C_\lambda^*(F)$
	(Proposition~\ref{prop:uniqueQT}). We do not know whether $F$ is exact,
	so we cannot appeal to Haagerup's theorem that quasitraces on exact
	$C^*$-algebras are traces \cite[Theorem~5.11]{Haagerup}. We first work
	in $C_\lambda^*(F')$ (Proposition~\ref{prop:restriction}), where $F'=[F,F]$. Since elements of $F'$ are
	trivial near $0$ and $1$, every finite set $S\subseteq F'$ has a
	conjugate $gSg^{-1}$, with $g\in F'$, whose elements are supported away
	from those of $S$ and hence commute with them. Iterating gives
	arbitrarily many pairwise commuting conjugate copies of $S$. Averaging
	over $N$ such copies preserves quasitrace values and shrinks commutators
	by a factor of at least $N$. Approximate additivity of quasitraces
	\cite[Corollary~II.2.6]{BH} therefore forces every normalised
	quasitrace on $C_\lambda^*(F')$ to be a tracial state
	(Lemma~\ref{lem:commutingconjugates}). As $\tau_{F'}$ is the unique
	tracial state on $C_\lambda^*(F')$, every normalised quasitrace on $C_\lambda^*(F)$ restricts to
	$\tau_{F'}$ on $C_\lambda^*(F')$.
	
	The homomorphism $F\to\Zz^2$ given by $g\mapsto(\log_2g'(0),\log_2g'(1))$ has kernel $F'$, 
    and we use the associated splitting $F=F'\rtimes\Zz^2$ to pass from $C_\lambda^*(F')$ 
    to $C_\lambda^*(F)$. 
	Each element in the group algebra $\Cz[F]$ thus has a finite Fourier expansion over $\Zz^2$
	with coefficients in $\Cz[F']$. Conjugating one bump element of $F'$ by
	the powers of one generator of $\Zz^2$, and another by the powers of the
	other, gives elements with pairwise disjoint supports. They generate two
	commuting copies of $C(\Tz^{\Zz})$ in $C_\lambda^*(F')$, on which the two generators
	act by independent coordinate shifts and $\tau_{F'}$ restricts to
	product Haar measure (Lemmas~\ref{lem:endpoint-bumps}
	and~\ref{lem:endpoint-shift-algebras}). A similar two-endpoint structure
	appears in the remark following \cite[Corollary~2.7]{Jolissaint}, where shifted central sequences near $0$
	and $1$ show that the $\Zz^2$-action on $L(F')$ is centrally free. Here
	we use this structure quantitatively. Finite coordinate windows can be
	shifted towards $0$ and $1$, where they commute exactly with any
	prescribed finite subset of $\Cz[F']$
	(Lemma~\ref{lem:endpoint-localisation}). A continuous form of Rokhlin's
	lemma for the coordinate shift (Lemma~\ref{lem:tower}) supplies tower
	bases depending on finitely many coordinates, with exactly orthogonal
	translates whose squares have total trace close to one. Let $a$ be a
	positive element of the complex group-algebra and $0<\delta<1$. Placing such towers near the
	endpoints, where they commute with the relevant Fourier coefficients of
	$a$, and combining them with the positive matrix over $C_\lambda^*(F')$ formed from
	these coefficients gives an exact matrix compression
	(Lemma~\ref{lem:compression}): there are a positive contraction $c\in C_\lambda^*(F')$,
	an integer $d\geq1$ and a row $y\in M_{1,d}(C_\lambda^*(F))$ such that
	\[
	cac=yy^*,\qquad y^*y\in M_d(C_\lambda^*(F'))_+,\qquad
	\tau_F(1-c^2)<\delta.
	\]
	
	Let $q$ be a normalised quasitrace on $C_\lambda^*(F)$, and let $q_d$ be its matrix
	amplification to $M_d(C_\lambda^*(F))$. Since $q|_{C_\lambda^*(F')}=\tau_{F'}$, uniqueness of
	amplifications gives $q_d=\tau_{F'}\otimes\Tr$ on $M_d(C_\lambda^*(F'))$. Applying
	$q_d(YY^*)=q_d(Y^*Y)$ to the matrix $Y$ with first row $y$ yields
	$q(cac)=\tau_F(cac)$. As $1-c^2\in C_\lambda^*(F')$, also
	$q(1-c^2)=\tau_F(1-c^2)<\delta$. A remainder estimate based on
	square-root subadditivity of quasitraces (Lemma~\ref{lem:remainder})
	then gives $|q(a)-\tau_F(a)|\leq2\|a\|\sqrt\delta$. Letting
	$\delta\to0$ gives $q(a)=\tau_F(a)$. Since positive finitely supported elements
	are dense in $C_\lambda^*(F)_+$ and quasitraces are norm continuous, $q$ and $\tau_F$
	agree on $C_\lambda^*(F)_+$, and hence on $C_\lambda^*(F)$ (Proposition~\ref{prop:rigidity}).
	
	Finally, Theorem~\ref{thm:Z-stability} and
	\cite[Proposition~5.2]{APTV} give strict comparison of positive elements
	of $C_\lambda^*(F)$ by quasitraces. As $C_\lambda^*(F)$ is simple and unital, this comparison can
	be tested on normalised quasitraces \cite[Paragraph~3.4]{APTV}, so
	Proposition~\ref{prop:uniqueQT} gives strict comparison by $\tau_F$.
	Since $C_\lambda^*(F)$ is infinite-dimensional, simple, unital and monotracial,
	Robert's theorem \cite[Theorem~1.1]{RobertComparison} shows that
	$(C_\lambda^*(F),\tau_F)$ is selfless.
	
	\subsection*{Declaration on the use of AI}
	The authors used the large language models GPT-6 Astra and GPT-6.1 Sol from OpenAI and Claude Opus 5.5 and Claude Fable 5.1 from Anthropic in preparing this paper. These were used to produce first drafts of proofs of results, to suggest improvements to the exposition, and to proofread the manuscript. The authors have verified every proof in the paper and have rewritten many of the AI-drafted ones.
	
	CB is supported by a ChatGPT for Academic Researchers plan and a Claude Team plan for Scientists.

	\section{Preliminaries}
	
	\subsection{Thompson's group $F$}
	
	Let $F$ be Thompson's group of increasing piecewise linear
	homeomorphisms of $[0,1]$ with finitely many breakpoints, all in
	$\Zz[1/2]\cap(0,1)$, and slopes in $2^{\Zz}$. We refer the reader to
	\cite{CFP} and \cite{Belk} for background on $F$. We record the basic
	facts we need, in addition to the nonamenability of $F$ from
	\cite{OpenAINonamenability} (see also \cite{KimLodha}).
	
	We write $F'=[F,F]$ for the commutator subgroup of $F$, and put
	\[
	A=C_\lambda^*(F),\qquad B=C_\lambda^*(F').
	\]
	All tensor products are minimal unless explicitly indicated otherwise.
	For $g\in F$, write $\lambda_g$ for its left regular unitary in $C_\lambda^*(F)$.
	For every subgroup $H\leq F$, the restriction of the left regular
	representation of $F$ to $H$ decomposes over the right cosets $Hg$
	as copies of the left regular representation of $H$. Thus the
	canonical inclusion of group rings extends to an isometric unital
	embedding $C_\lambda^*(H)\subseteq C_\lambda^*(F)$.
	The canonical traces satisfy
	\[
	\tau_F(\lambda_g)=
	\begin{cases}1,&g=1,\\0,&g\ne1,\end{cases}
	\qquad \tau_{F'}=\tau_F|_B.
	\]
	For a nondegenerate closed interval $I\subseteq[0,1]$ with dyadic
	endpoints, set
	\[
	F_I=\{g\in F:g(t)=t\text{ for every }t\in[0,1]\setminus I\},
	\qquad A_I=C_\lambda^*(F_I)\subseteq C_\lambda^*(F).
	\]
	
	\begin{lemma}[{\cite[Theorem 4.1]{CFP}}]
		\label{lem:F'}
		The map
		\begin{equation}
			\label{eq:endpoint}
			\varphi:F\longrightarrow\Zz^2,\qquad
			\varphi(g)=(\log_2g'(0),\log_2g'(1))
		\end{equation}
		where the derivatives at $0$ and $1$ are one-sided, has kernel $F'$.
		Moreover, $F'$ consists of the elements of $F$ which are trivial in
		neighbourhoods of $0$ and $1$.
	\end{lemma}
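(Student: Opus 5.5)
The plan is to prove the three assertions in turn: $\varphi$ is a homomorphism, its kernel $K$ is exactly the set of elements trivial near $0$ and $1$, and $K=F'$.

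\textbf{Step 1: $\varphi$ is a homomorphism.} Each $g\in F$ fixes $0$ and $1$. Near $0$ it is linear, $g(t)=2^{a}t$ on some $[0,\epsilon)$, and near $1$ it is affine, $g(t)=1-2^{b}(1-t)$. The one-sided chain rule gives $(gh)'(0)=g'(0)h'(0)$ and likewise at $1$. Hence $\varphi$ is a homomorphism into the abelian group $\Zz^2$, and so $F'\subseteq\ker\varphi$.

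\textbf{Step 2: the kernel is the set of elements trivial near both endpoints.} Since $g$ is linear on a neighbourhood of $0$ with slope $g'(0)$ and fixes $0$, we have $g'(0)=1$ if and only if $g$ is the identity near $0$. The same holds at $1$. This gives $\ker\varphi=K$.

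\textbf{Step 3: $K\subseteq F'$.} This is the real content. Take $g\in K$, supported in some $[a,b]\subset(0,1)$ with dyadic $a,b$. We must write $g$ as a product of commutators in $F$. There are two routes.

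\emph{Route via $F_{[a,b]}\cong F$.} First, $F_{[a,b]}\cong F$ by a dyadic PL conjugation, using the standard fact that any two dyadic intervals are related by an element of $F$ (or a PL dyadic map). Choose $h\in F$ with $h$ supported in $(0,1)$ and $h(b)<a$, pushing $[a,b]$ off itself. This can be done by choosing $h$ to be a strong contraction towards $0$ on $[\epsilon,1-\epsilon]$, trivial near $0$ and $1$; such an $h$ exists because slopes are only constrained to be powers of $2$. Then conjugates of elements of $F_{[a,b]}$ by $h$ have disjoint support. The obvious trick $g=[g,h]\cdot(hgh^{-1})$ only produces another element of $K$, so it does not finish the argument. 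The better trick is to use that $F_{[a,b]}$ is a copy of $F$ whose abelianisation is again detected by endpoint slopes. Enlarge to an interval $[a',b']\supset[a,b]$ with $a'<a$ and $b<b'$, so that $g$ sits inside $F_{[a',b']}\cong F$ and is trivial near the endpoints of $[a',b']$.

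\emph{Route via generators, which I will actually use.} Use the standard generators $x_0,x_1$. The subgroup $K$ is generated by elements of $F_{[a,b]}$, and every element of $F_{[a,b]}$ is a product of conjugates of $x_1x_0^{-1}$-type elements. It therefore suffices to show that $F_{J}\subseteq F'$ for each dyadic interval $J\subset(0,1)$. For this, pick $u\in F$ mapping a larger dyadic interval $J'\supset J$ onto $J$. Then $F_J\subseteq F_{J'}$ and $uF_{J'}u^{-1}=F_J$. Every $k\in F_J$ can be written as $k=u k_1u^{-1}$ with $k_1=u^{-1}ku\in F_{J'}$. Comparing $k$ and $k_1$ modulo $F'$ would be circular, however. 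So instead I will use the classical argument: $F_J\cong F$ via a conjugation inside the larger $F_{J'}\cong F$, and $F_J$ lies in the commutator subgroup of $F_{J'}$ because, viewed in $F_{J'}\cong F$, its elements are trivial near the endpoints. This reduces everything to a single computation. For the generators $x_0,x_1$ of $F$, one shows that $F$ modulo the normal closure of the commutators has rank $2$, via the relations $x_0^{-1}x_nx_0=x_{n+1}$, which force $x_n\equiv x_1$ for all $n\geq1$. Then $\varphi$ induces a surjection $F^{ab}\to\Zz^2$ between groups generated by two elements, namely the images of $x_0$ and $x_1$, and $\Zz^2$ is Hopfian on such quotients: a surjection from a $2$-generated abelian group onto $\Zz^2$ is an isomorphism. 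Hence $\ker\varphi=F'$.

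The main obstacle is this abelianisation computation. The approach is to verify that $\varphi(x_0)=(-1,1)$ and $\varphi(x_1)=(0,1)$, up to convention, are linearly independent, so that $\varphi$ is surjective. Since $F$ is generated by $x_0$ and $x_1$, the abelianisation $F^{ab}$ is a quotient of $\Zz^2$, and a surjection $\Zz^2\twoheadrightarrow F^{ab}\twoheadrightarrow\Zz^2$ must be an isomorphism. This gives $F^{ab}\cong\Zz^2$ with $\ker\varphi=F'$ directly, with no need for the support-pushing argument above.
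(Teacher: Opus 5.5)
Your final argument is correct and is essentially the proof of \cite[Theorem~4.1]{CFP}, which the paper cites without reproving. In that argument, $\varphi$ is a homomorphism by the one-sided chain rule, its kernel consists of the elements trivial near $0$ and $1$ because each $g\in F$ is linear near the endpoints, and $\ker\varphi=F'$ because $F^{\mathrm{ab}}$ is a $2$-generated abelian group whose image contains the $\Zz$-basis $(-1,1),(0,1)$ of $\Zz^2$, so the Hopf property of $\Zz^2$ makes $F^{\mathrm{ab}}\to\Zz^2$ an isomorphism. You should delete the exploratory ``routes'' in Step~3: the claim that $F_J$ lies in the commutator subgroup of $F_{J'}$ because its elements are trivial near the endpoints of $J'$ is exactly the statement being proved, so it is circular, and your final argument does not depend on it.
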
 
	
	We shall make use of the following basic property of $F\curvearrowright [0,1]$, which follows from \cite{CFP}.
\begin{lemma}[See {\cite[Lemma~4.2]{CFP}}]
\label{lem:interval-moving}
Let $K,J\subseteq(0,1)$ be nondegenerate closed intervals with
dyadic endpoints. Then, there exists $g\in F'$ with $g(K)=J$.
\end{lemma}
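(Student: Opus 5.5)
Write $K=[a,b]$ and $J=[c,d]$ with $0<a<b<1$ and $0<c<d<1$ dyadic rationals. The plan is to build $g$ as a piecewise linear homeomorphism with dyadic breakpoints and slopes in $2^{\Zz}$ that sends $a\mapsto c$ and $b\mapsto d$ and is the identity near $0$ and $1$. Lemma~\ref{lem:F'} then places it in $F'$.

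The basic tool is the standard fact (see \cite{CFP}) that for dyadic rationals $x_0<x_1$ and $y_0<y_1$ there is an increasing piecewise linear bijection $[x_0,x_1]\to[y_0,y_1]$ with finitely many dyadic breakpoints and slopes in $2^{\Zz}$. To prove it:
\begin{enumerate}
\item Write $x_1-x_0=m2^{-k}$ and $y_1-y_0=n2^{-k}$ with a common $k$.
\item Subdivide each interval into standard dyadic intervals of length $2^{-k}$, giving $m$ and $n$ pieces respectively.
\item Adjust the two subdivisions to have the same number of pieces, by repeatedly halving one piece in whichever interval has fewer. Each halving raises the count by one, so the counts can be equalised.
\item Map the pieces in order by affine maps. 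Each piece has dyadic endpoints and length a power of $2$, so every affine map has slope a power of $2$ and sends dyadic rationals to dyadic rationals.
\end{enumerate}
This is routine and is the only genuinely combinatorial step.

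Next choose dyadic $\epsilon>0$ with $\epsilon<\min(a,c)$ and $\max(b,d)<1-\epsilon$. Apply the fact on each of $[\epsilon,a]\to[\epsilon,c]$, $[a,b]\to[c,d]$ and $[b,1-\epsilon]\to[d,1-\epsilon]$, and let $g$ be the identity on $[0,\epsilon]\cup[1-\epsilon,1]$. The pieces glue to an increasing homeomorphism of $[0,1]$. All breakpoints lie in $\Zz[1/2]\cap(0,1)$ and all slopes lie in $2^{\Zz}$, so $g\in F$. By construction $g(K)=J$. Since $g$ is trivial near $0$ and $1$, we have $g'(0)=g'(1)=1$, so $\varphi(g)=0$ and $g\in F'$ by Lemma~\ref{lem:F'}.

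The only point needing care is the equal-count subdivision in the basic fact; everything else is bookkeeping. The main obstacle, such as it is, is making sure the breakpoints produced by the subdivision are dyadic. This holds because every subdivision point is obtained by halving dyadic intervals.
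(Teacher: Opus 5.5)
Your proof is correct and follows the paper's argument. The paper likewise inserts dyadic points $\alpha<\min\{a,c\}$ and $\beta>\max\{b,d\}$, obtains a map in $F$ matching the partitions $(0,\alpha,a,b,\beta,1)$ and $(0,\alpha,c,d,\beta,1)$ from \cite[Lemma~4.2]{CFP}, extends it by the identity outside $[\alpha,\beta]$, and concludes with Lemma~\ref{lem:F'}; the only difference is that you reprove the interval-matching fact by equalising dyadic subdivisions instead of citing it, which is a valid self-contained substitute.
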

\begin{proof}
Write $K=[a,b]$ and $J=[c,d]$, where $a,b,c,d\in \Zz[1/2]\cap (0,1)$. Choose
dyadic points $\alpha,\beta\in \Zz[1/2]\cap (0,1)$ satisfying
\[
0<\alpha<\min\{a,c\}, \qquad \max\{b,d\}<\beta<1.
\]
Apply \cite[Lemma~4.2]{CFP} to the dyadic partitions
\[
(0,\alpha,a,b,\beta,1)
\quad\text{and}\quad
(0,\alpha,c,d,\beta,1)
\]
to obtain $h\in F$ sending each point of the first partition
to the corresponding point of the second. Define
\[
g(t)=
\begin{cases}
h(t), & t\in[\alpha,\beta],\\
t, & t\in[0,\alpha]\cup[\beta,1].
\end{cases}
\]
Since $h$ fixes $\alpha$ and $\beta$, the pieces agree
at the joining points and define an increasing homeomorphism.
All breakpoints are dyadic and all slopes are powers of two,
so $g\in F$. Moreover, $g$ is the identity near both
endpoints of $[0,1]$, hence belongs to $F'$ by
Lemma~\ref{lem:F'}. Finally, $g(a)=c$ and $g(b)=d$,
so $g(K)=J$.
\end{proof}

	We use the canonical element $s\in F$ defined by
	\begingroup
	\begin{equation}\label{eq:s-generator}
		s(t)=\begin{cases}
			t/2,&0\leq t\leq1/2,\\
			t-1/4,&1/2\leq t\leq3/4,\\
			2t-1,&3/4\leq t\leq1.
		\end{cases}
		\qquad
		\vcenter{\hbox{%
				\resizebox{!}{2.5cm}{%
					\begin{tikzpicture}[x=1.5cm,y=1.5cm,font=\scriptsize,
						line cap=round,line join=round]
						\draw[gray!55,densely dotted]
						(0,1/4) -- (1/2,1/4) -- (1/2,0)
						(0,1/2) -- (3/4,1/2) -- (3/4,0);
						\draw[gray!65,dashed] (0,0) -- (1,1);
						\draw[->] (0,0) -- (1.08,0) node[right] {$t$};
						\draw[->] (0,0) -- (0,1.08) node[above] {$s(t)$};
						\draw[thick] (0,0) -- (1/2,1/4) -- (3/4,1/2) -- (1,1);
						\fill (0,0) circle (0.7pt);
						\fill (1/2,1/4) circle (0.7pt);
						\fill (3/4,1/2) circle (0.7pt);
						\fill (1,1) circle (0.7pt);
						\node[below left] at (0,0) {$0$};
						\draw (1/2,0.012) -- (1/2,-0.012) node[below] {$\frac12$};
						\draw (3/4,0.012) -- (3/4,-0.012) node[below] {$\frac34$};
						\draw (1,0.012) -- (1,-0.012) node[below] {$1$};
						\draw (0.012,1/4) -- (-0.012,1/4) node[left] {$\frac14$};
						\draw (0.012,1/2) -- (-0.012,1/2) node[left] {$\frac12$};
						\draw (0.012,1) -- (-0.012,1) node[left] {$1$};
					\end{tikzpicture}
		}}}
	\end{equation}
	\endgroup
	We use the following explicit splitting (cf.\ \cite[Chapter C]{BS16}).
	Let $u$ and $v$ be the affine copies of $s$ and $s^{-1}$ on
	$[0,1/2]$ and $[1/2,1]$, respectively, extended by the identity:
	\[
	u(t)=
	\begin{cases}
		\frac12s(2t),&0\leq t\leq\frac12,\\
		t,&\frac12\leq t\leq1,
	\end{cases}
	\qquad
	v(t)=
	\begin{cases}
		t,&0\leq t\leq\frac12,\\
		\frac12\bigl(1+s^{-1}(2t-1)\bigr),&\frac12\leq t\leq1.
	\end{cases}
	\]
	These elements commute and satisfy
	\[
	\varphi(u)=(-1,0),\qquad \varphi(v)=(0,-1).
	\]
	Thus $\varphi$ is surjective and, by Lemma~\ref{lem:F'}, induces an
	isomorphism $F/F'\cong\Zz^2$. The map $(m,n)\mapsto u^{-m}v^{-n}$
	is a section of $\varphi$. We use the identification induced by
	$-\varphi$, so that $F=F'\rtimes\langle u,v\rangle$ and
	$u^{k_1}v^{k_2}$ represents $k=(k_1,k_2)\in\Zz^2$.
	With this convention, every element $a\in \Cz[F]$ has a unique expression of the form
	\begin{equation}
		\label{eq:fourier}
		a=\sum_{k\in\Zz^2} b_kW_k,
	\end{equation}
	where $W_k\coloneqq\lambda_{u^{k_1}v^{k_2}}$ and $b_k\in\Cz[F']$ are zero for all but finitely many $k$. 
	
	We record the following important consequence of the nonamenability of $F$, which follows from the theorem by Le Boudec and Matte Bon \cite[Corollary~4.2]{LBMB} combined with general results by Breuillard, Kalantar, Kennedy, and Ozawa \cite{BKKO}:
	\begin{theorem}
		\label{thm:LBMB}
		If $F$ is nonamenable, then both $C_\lambda^*(F)$ and $C_\lambda^*(F')$ are simple C*-algebras.
	\end{theorem}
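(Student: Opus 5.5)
For $C_\lambda^*(F)$ we use the result of Le Boudec and Matte Bon directly. By \cite[Corollary~4.2]{LBMB}, $F$ is nonamenable if and only if $C_\lambda^*(F)$ is simple. So under the hypothesis, simplicity of $A=C_\lambda^*(F)$ follows at once.

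For $B=C_\lambda^*(F')$ we pass to the dynamical characterisation of C*-simplicity from \cite{BKKO}. A discrete group is C*-simple if and only if it has trivial amenable radical and its Furstenberg boundary action is (topologically) free. Equivalently, it is C*-simple if and only if it admits a topologically free boundary action.

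Le Boudec and Matte Bon obtain simplicity of $C_\lambda^*(F)$ from the following ingredients:
\begin{itemize}
\item nonamenability of $F'$, which follows from nonamenability of $F$ because $F/F'\cong\Zz^2$ is amenable;
\item the fact that point stabilisers of suitable boundary actions are amenable, or otherwise controlled.
\end{itemize}
I would first check whether \cite[Corollary~4.2]{LBMB} or its surrounding results already cover $F'$, which I expect. Their framework treats groups of homeomorphisms acting micro-supportedly on $(0,1)$, and $F'$ acts on $(0,1)$ with the same rigid stabiliser structure as $F$: for each dyadic interval $I\subseteq(0,1)$, the rigid stabiliser $F_I$ lies in $F'$ by Lemma~\ref{lem:F'}. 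If that citation is not directly available, there is an abstract route. \cite{BKKO} shows that C*-simplicity passes to normal subgroups: a normal subgroup of a C*-simple group is C*-simple. Since $F'$ is normal in $F$, C*-simplicity of $F$ gives C*-simplicity of $F'$, so $B$ is simple.

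The main obstacle is making sure the normal-subgroup permanence statement is quoted correctly. In \cite{BKKO}, normal subgroups of C*-simple groups are C*-simple; the converse direction requires trivial centraliser conditions. That is the direction we need here, so the proof reduces to two citations. As a fallback, one can use unique trace. The trace part of \cite{BKKO} gives that $B$ has a unique trace because the amenable radical of $F'$ is trivial: it is a normal amenable subgroup of $F'$, hence characteristic and normal in $F$, hence contained in the trivial amenable radical of the nonamenable C*-simple group $F$. Then simplicity of $B$ would still need the boundary argument, so the normal-subgroup permanence remains the cleanest route.
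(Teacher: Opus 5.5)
Your proposal is correct and is the paper's argument: \cite[Corollary~4.2]{LBMB} gives $C^*$-simplicity of $F$, and the normal-subgroup permanence \cite[Theorem~1.4]{BKKO}, applied to the normal subgroup $F'$, gives $C^*$-simplicity of $F'$. The surrounding discussion is not needed for the proof; this covers whether the Le Boudec--Matte Bon framework applies to $F'$ directly, and the unique-trace fallback.
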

	\begin{proof}
		By \cite[Corollary~4.2]{LBMB}, nonamenability of $F$ implies that
		$F$ is $C^*$-simple, that is, $C_\lambda^*(F)$ is simple.
		Since $F'$ is a normal subgroup of $F$,
		\cite[Theorem~1.4]{BKKO} implies that $F'$ is also $C^*$-simple.
	\end{proof}
	Every $C^*$-simple group has trivial amenable radical. Hence,
	\cite[Theorem~1.3]{BKKO} implies that $C_\lambda^*(F)$ and $C_\lambda^*(F')$ are monotracial,
	with unique tracial states $\tau_F$ and $\tau_{F'}$, respectively.

	\subsection{Quasitraces}\label{subsec:quasitraces}
	
	Throughout this subsection, $D$ denotes a unital $C^*$-algebra with
	unit $1$. By a \emph{quasitrace} on $D$ we mean a bounded
	$2$-quasitrace in the sense of Blackadar and Handelman~\cite{BH}; in the
	formulation of \cite[Definition~3.1]{Haagerup}, this is a function
	$q\colon D\to\mathbb{C}$ such that
	\begin{enumerate}
		\item[(Q1)] $q(x^*x)=q(xx^*)\geq 0$ for all $x\in D$;
		\item[(Q2)] $q(a+ib)=q(a)+i\,q(b)$ for all selfadjoint $a,b\in D$;
		\item[(Q3)] $q$ is linear on every abelian $C^*$-subalgebra of $D$;
		\item[(Q4)] there is a function $\tilde q\colon M_2(D)\to\mathbb{C}$
		satisfying (Q1)--(Q3) with $M_2(D)$ in place of $D$, such that
		$q(x)=\tilde q(x\otimes e_{11})$ for all $x\in D$.
	\end{enumerate}
    We write $\QT_1(D)$ for the
	set of \emph{normalised} quasitraces on $D$, that is, those with
	$q(1)=1$.
	
	We shall use the following facts, valid for every quasitrace $q$ on $D$.
	\begin{enumerate}
		\item[(a)] \emph{Order preservation} \cite[Corollary~3.4(1)]{Haagerup}:
		if $x,y\in D$ are selfadjoint and $x\leq y$, then $q(x)\leq q(y)$.
		\item[(b)] \emph{Matrix amplification} \cite[Corollary~3.4(2)]{Haagerup}:
		for every $n\geq 1$ there is a unique quasitrace $q_n$ on $M_n(D)$
		such that $q_n(x\otimes e_{11})=q(x)$ for all $x\in D$.
		\item[(c)] \emph{Square-root subadditivity}
		\cite[Lemma~3.5(1)]{Haagerup}: for all positive $a,b\in D$,
		\begin{equation}\label{eq:sqrt}
			\sqrt{q(a+b)}\leq\sqrt{q(a)}+\sqrt{q(b)}.
		\end{equation}
		\item[(d)] \emph{Approximate additivity} \cite[Corollary~II.2.6, p.~324]{BH}:
		for every $\varepsilon>0$ there is $\delta>0$ such that, for all
		selfadjoint contractions $x,y\in D$,
		\begin{equation}\label{eq:almostcommuting}
			\|[x,y]\|<\delta\quad\Longrightarrow\quad
			|q(x+y)-q(x)-q(y)|<\varepsilon,
		\end{equation}
		where $[x,y]=xy-yx$. Here, $\delta$ may depend on $q$ as well as
		on $\varepsilon$.
	\end{enumerate}
	
	Two further properties follow from the definition and~(a). First,
	$q$ is unitarily invariant: $q(uxu^*)=q(x)$ for all $x\in D$ and all
	unitaries $u\in D$. 
    Second, $q$ is bounded and norm continuous (cf.\
	\cite[Corollary~II.2.5, p.~324]{BH}). 
	
	\subsection{Jiang--Su absorption}\label{subsec:Z-stability}
	
	A $C^*$-algebra $D$ is $\cZ$-stable if $D\cong D\otimes\cZ$, where $\cZ$ is the Jiang--Su algebra \cite{JiangSu}.
	For a unital $D$, write
	\[
	D_\infty=\ell^\infty(\Nz,D)/c_0(\Nz,D),
	\]
	and identify $D$ with its diagonal image. The relative commutant
	$D_\infty\cap D'$ is its central sequence algebra.
	Since $\cZ$ is strongly self-absorbing
	\cite[Example~1.14(iv)]{TomsWinter}, for separable unital $D$ a unital
	$*$-homomorphism $\cZ\to D_\infty\cap D'$ implies $\cZ$-stability
	\cite[Theorem~2.2]{TomsWinter}.
	
	\subsection{Strict comparison and selflessness}\label{subsec:comparison-selflessness}
	
    Let $D$ be a simple separable unital $C^*$-algebra
	with unique tracial state $\tau$, and let $\cK$ denote the compact
	operators on a separable infinite-dimensional Hilbert space. For
	$c\in(D\otimes\cK)_+$, put
	\[
	d_\tau(c)=\lim_{n\to\infty}(\tau\otimes\Tr)(c^{1/n}),
	\]
	where $\tau\otimes\Tr$ is the lower semicontinuous extension of the
	unnormalised matrix traces. Strict comparison by $\tau$ means that
	\[
	d_\tau(a)<d_\tau(b)\quad\Longrightarrow\quad a\precsim_{\mathrm{Cu}}b
	\qquad(a,b\in(D\otimes\cK)_+).
	\]
	Here, $a\precsim_{\mathrm{Cu}}b$ means that there are $x_n\in D\otimes\cK$ with $\|a-x_n^*bx_n\|\to0$; this is Cuntz subequivalence.
	
	We recall the ultrapower formulation of selflessness
	from \cite[Section~1]{RobertComparison}. For a unital $D$ with faithful
	tracial state $\tau$, the pair $(D,\tau)$ is selfless if there exist a free
	ultrafilter $\mathcal U$ on $\mathbb N$, a unital $C^*$-algebra
	$C\ne\Cz$ with a state $\kappa$ whose GNS representation is faithful,
	and a unital state-preserving $*$-homomorphism
	\[
	(D,\tau)*(C,\kappa)\longrightarrow(D^{\mathcal U},\tau^{\mathcal U})
	\]
	whose restriction to $D$ is the diagonal embedding. The free product
	is reduced, and $D^{\mathcal U}$ is the norm ultrapower
	\[
	D^{\mathcal U}
	=\ell^\infty(\mathbb N,D)\big/
	\{(x_n):\lim_{n\to\mathcal U}\|x_n\|=0\},
	\qquad
	\tau^{\mathcal U}([(x_n)])=\lim_{n\to\mathcal U}\tau(x_n).
	\]
	
	\section{From the Kirchberg--R\o rdam character criterion to $\cZ$-stability}
	\subsection{A local copy of the Jiang--Su algebra}
	
	A basic dyadic interval has the form
	$[k2^{-n},(k+1)2^{-n}]$, where $n\geq0$ and $0\leq k<2^n$.
	If $I=[a,b]$ has dyadic endpoints and $b-a\in2^\Zz$, affine
	rescaling from $[0,1]$ onto $I$, followed by extension by the identity,
	gives a group isomorphism $F\to F_I$ and hence an isomorphism
	$A\to A_I$, where $A=C_\lambda^*(F)$ and we stick to the notation introduced in the preliminary section. Both rescaling and its inverse preserve dyadic
	breakpoints and slopes in $2^\Zz$; see the argument of
	\cite[Lemma~4.4]{CFP}.
	
	Finitely many subgroups supported on intervals with disjoint interiors form a direct product. They commute, and restricting a relation to each interval proves injectivity. The regular representation of this product is the spatial tensor product of its regular representations. Thus, their reduced $C^*$-algebras realise the minimal tensor product faithfully inside $C_\lambda^*(F)$. Common endpoints cause no difficulty since each factor fixes its interval's endpoints.
	
	\begin{proposition}\label{prop:localZ}
		$C_\lambda^*(F)$ contains a unital copy of $\cZ$.
	\end{proposition}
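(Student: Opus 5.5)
The plan follows the outline in the introduction. First, nonamenability of $F$ \cite{OpenAINonamenability} together with Theorem~\ref{thm:LBMB} shows that $A=C_\lambda^*(F)$ is simple. A simple unital C*-algebra that is not $\Cz$ has no characters: the kernel of a character would be a proper nonzero ideal, and $A\neq\Cz$ because $F$ is infinite. Since $A$ is also unital and separable, \cite[Corollary~1.3]{OpenAICharacter} applies and gives a unital embedding $\cZ\hookrightarrow A^{\otimes\infty}$, where $A^{\otimes\infty}$ is the infinite minimal tensor power, i.e.\ the inductive limit of $A^{\otimes n}$ along the maps $x\mapsto x\otimes 1$.

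Second, we realise $A^{\otimes\infty}$ unitally inside $A$. Take the basic dyadic intervals $I_n=[2^{-n-1},2^{-n}]$ for $n\geq0$. They have pairwise disjoint interiors and lengths in $2^{\Zz}$. Affine rescaling gives isomorphisms $A\cong A_{I_n}$. For each $N$, the subgroups $F_{I_0},\dots,F_{I_{N-1}}$ generate their direct product, and by the discussion preceding the proposition the C*-algebra they generate is $A_{I_0}\otimes\cdots\otimes A_{I_{N-1}}\cong A^{\otimes N}$, embedded unitally and faithfully in $A$. These embeddings are compatible with the connecting maps $x\mapsto x\otimes1$, since $A_{I_N}$ contains $1$. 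Hence they define an isometric unital $*$-homomorphism on the algebraic inductive limit, and it extends to an injective unital $*$-homomorphism $A^{\otimes\infty}\to A$. Here injectivity also follows from simplicity of $A^{\otimes\infty}$, or simply from isometry at each finite stage.

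Composing the two embeddings gives a unital copy of $\cZ$ in $A$. The main point to verify carefully is faithfulness of the finite tensor products, namely that the C*-algebra generated by $A_{I_0},\dots,A_{I_{N-1}}$ inside $C_\lambda^*(F)$ carries the minimal tensor norm. I would argue as follows. The product group $H=F_{I_0}\times\cdots\times F_{I_{N-1}}$ is a subgroup of $F$, so $C_\lambda^*(H)\subseteq C_\lambda^*(F)$ isometrically, by the coset decomposition noted in the preliminaries. Moreover, $C_\lambda^*(H_1\times H_2)=C_\lambda^*(H_1)\otimes C_\lambda^*(H_2)$ spatially, because $\ell^2(H_1\times H_2)=\ell^2(H_1)\otimes\ell^2(H_2)$. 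Everything else is a direct citation.
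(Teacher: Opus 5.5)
Your proposal is correct and follows the paper's proof essentially step for step. Nonamenability and Theorem~\ref{thm:LBMB} give simplicity, so there are no characters; \cite[Corollary~1.3]{OpenAICharacter} then embeds $\cZ$ unitally in $C_\lambda^*(F)^{\otimes\infty}$; and commuting interval subgroups realise this tensor power isometrically and unitally inside $C_\lambda^*(F)$. The only difference is cosmetic: the paper uses the intervals $J_n=[1/4+2^{-n-2},1/4+2^{-n-1}]$ inside $[1/4,1/2]$ rather than your $[2^{-n-1},2^{-n}]$, and either choice works because Proposition~\ref{prop:insertion} rescales the copy of $\cZ$ into $A_{[1/4,1/2]}$ anyway.
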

	
	\begin{proof}
		By the nonamenability input \cite{OpenAINonamenability} and
		Theorem~\ref{thm:LBMB} (by Le Boudec and Matte Bon), $C_\lambda^*(F)$ is simple, and thus has no characters.
		
		By \cite[Corollary~1.3]{OpenAICharacter}, the infinite minimal tensor power
		\[
		C_\lambda^*(F)^{\otimes\infty}
		=\varinjlim_N\bigl(C_\lambda^*(F)^{\otimes N},x\mapsto x\otimes1\bigr)
		\]
		contains a unital copy of $\cZ$. We embed this tensor power unitally into $C_\lambda^*(F)$.
		Choose countably many basic dyadic intervals with disjoint interiors
		inside $[1/4,1/2]$; for example, take
		\[
		J_n=[1/4+2^{-n-2},\,1/4+2^{-n-1}],\qquad n\geq1.
		\]
        Let $A_{J_n}$ be as introduced in the preliminary section.
		Let $\alpha_n:A\to A_{J_n}$ be the isomorphism induced by affine
		rescaling. The disjoint-interval construction gives isometric unital
		homomorphisms
		\[
		\iota_N:A^{\otimes N}\longrightarrow A_{[1/4,1/2]},\qquad
		\iota_N(a_1\otimes\cdots\otimes a_N)
		=\alpha_1(a_1)\cdots\alpha_N(a_N).
		\]
		Since $\alpha_{N+1}(1_A)=1_A$, these maps satisfy
		$\iota_{N+1}(x\otimes1_A)=\iota_N(x)$. They therefore induce
		an isometric unital homomorphism
		$\iota:A^{\otimes\infty}\to A_{[1/4,1/2]}\subseteq A$.
		Composing $\iota$ with the Jiang--Su embedding proves the assertion.
	\end{proof}
	
	\subsection{Endpoint absorption}
	
	The next lemma is a routine consequence of the equivariant
	$\cZ$-stability of the Bernoulli shift on $\bigotimes_{\Zz}\cZ$
	\cite[Theorem~B]{GL}.
	\begin{lemma}\label{lem:shiftblocks}
		Let
		\[
		\mathcal N=\bigotimes_{k\in\Zz}\cZ,
		\]
		and let $\sigma$ shift coordinate $k$ to coordinate $k+1$.
		Identify $\mathcal N_{[-m,m]}=\bigotimes_{k=-m}^{m}\cZ$ with its
		unital image in $\mathcal N$. There exist integers $m_n\geq0$
		and unital completely positive maps
		\[
		\psi_n:\cZ\longrightarrow\mathcal N_{[-m_n,m_n]}
		\]
		such that, for every $z,w\in\cZ$ and $r\in\Zz$,
		\[
		\|\psi_n(zw)-\psi_n(z)\psi_n(w)\|\longrightarrow0,
		\qquad
		\|\sigma^r(\psi_n(z))-\psi_n(z)\|\longrightarrow0.
		\]
	\end{lemma}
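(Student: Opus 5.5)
The plan is to read off the lemma from \cite[Theorem~B]{GL}, which states that the Bernoulli shift $\sigma$ on $\mathcal N$ is equivariantly $\cZ$-stable, that is, $(\mathcal N,\sigma)\cong(\mathcal N\otimes\cZ,\sigma\otimes\id)$ as $\Zz$-$C^*$-dynamical systems. The proof has three steps: produce an asymptotically central, asymptotically $\sigma$-invariant unital copy of $\cZ$ in $\mathcal N$; truncate it to finite blocks by conditional expectations; and pass to a diagonal sequence.

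\textbf{Step 1: an approximately invariant copy of $\cZ$.} Let $\theta\colon\mathcal N\to\mathcal N\otimes\cZ$ be an equivariant isomorphism. The maps $z\mapsto\theta^{-1}(1\otimes z)$ give a unital $*$-homomorphism $\cZ\to\mathcal N$ whose image is fixed pointwise by $\sigma$. Exact multiplicativity and exact $\sigma$-invariance would then hold trivially. If one prefers the form of \cite[Theorem~B]{GL} that only provides a unital embedding $\cZ\to(\mathcal N_\infty\cap\mathcal N')^{\tilde\sigma}$, Step~1 instead supplies a sequence of unital $*$-homomorphisms (equivalently, u.c.p.\ lifts) $\phi_n\colon\cZ\to\mathcal N$ with
\[
\|\sigma(\phi_n(z))-\phi_n(z)\|\to0 \qquad (z\in\cZ).
\]
Since $\sigma^r(x)-x$ telescopes into $|r|$ terms of the form $\sigma^j(\sigma(x)-x)$ and $\sigma$ is isometric, this gives invariance for every $r\in\Zz$.

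\textbf{Step 2: truncation to finite blocks.} Let $E_m\colon\mathcal N\to\mathcal N_{[-m,m]}$ be the conditional expectation $\id\otimes\tau_\cZ^{\otimes}$ onto the finite block. It is a unital completely positive map, and $E_m(x)\to x$ for all $x\in\mathcal N$ because $\bigcup_m\mathcal N_{[-m,m]}$ is dense and the $E_m$ are contractive. Fix a dense sequence $(z_i)$ in $\cZ$. For each $n$, first choose $\phi_n$ from Step~1 (or the single $\phi$, in the exact version) so that, for $i,j,|r|\le n$,
\[
\|\sigma^r\phi_n(z_i)-\phi_n(z_i)\|<1/n.
\]
Then choose $m_n$ so large that $\|E_{m_n}(y)-y\|<1/n$ for the finitely many elements
\[
y\in\{\phi_n(z_i),\ \phi_n(z_iz_j),\ \sigma^r\phi_n(z_i)\}.
\]
Set $\psi_n=E_{m_n}\circ\phi_n$. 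This map is unital completely positive with values in $\mathcal N_{[-m_n,m_n]}$.

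\textbf{Step 3: the estimates.} By the triangle inequality, $\psi_n(z_iz_j)-\psi_n(z_i)\psi_n(z_j)$ is within $O(1/n)$ of
\[
\phi_n(z_iz_j)-\phi_n(z_i)\phi_n(z_j)=0.
\]
Similarly, $\sigma^r\psi_n(z_i)-\psi_n(z_i)$ is within $O(1/n)$ of
\[
\sigma^r\phi_n(z_i)-\phi_n(z_i),
\]
since each $\psi_n(z_i)$ is $1/n$-close to $\phi_n(z_i)$ and $\sigma^r$ is isometric. All maps involved are contractive, so the convergence extends from the dense sequence to all $z,w\in\cZ$.

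The only nonroutine point is Step~1: pinning down the precise form of \cite[Theorem~B]{GL} and extracting from it an approximately $\sigma$-invariant (rather than merely approximately central) unital copy of $\cZ$. The equivariant isomorphism $(\mathcal N,\sigma)\cong(\mathcal N\otimes\cZ,\sigma\otimes\id)$ gives this immediately. If only the fixed-point central-sequence formulation is available, lifting the embedding through the quotient $\ell^\infty(\Nz,\mathcal N)\to\mathcal N_\infty$ by the Choi--Effros theorem (using nuclearity of $\cZ$) yields the required u.c.p.\ maps. Their asymptotic multiplicativity and invariance are exactly what is used in Step~3.
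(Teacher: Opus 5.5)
\textbf{There is a genuine error in Step 1.} Your Steps 2 and 3 are essentially the paper's truncation argument: the expectations $E_m$, a choice of $m_n$ keeping $E_{m_n}\phi_n$ close to $\phi_n$ on finitely many elements, and telescoping in $r$. But your main line for Step 1 is false, not merely unjustified. \cite[Theorem~B]{GL} gives only a \emph{cocycle} conjugacy. That is, there are an isomorphism $\theta\colon\mathcal N\to\mathcal N\otimes\cZ$ and a unitary $u\in\mathcal N\otimes\cZ$ with $\theta\sigma\theta^{-1}=\Ad(u)\circ(\sigma\otimes\id)$.

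A genuine conjugacy cannot exist, because the fixed-point algebra of the Bernoulli shift is $\Cz 1$. Suppose $\sigma(x)=x$ and $y\in\mathcal N_{[-m,m]}$ satisfies $\|x-y\|<\varepsilon$. Then $\|\sigma^{2m+1}(y)-y\|<2\varepsilon$. The contractive expectation $E_m$ maps $\sigma^{2m+1}(y)$ to the scalar $\tau(y)1$, where $\tau$ is the product trace. Applying $E_m$ therefore gives $\|y-\tau(y)1\|<2\varepsilon$, so $x$ lies within $3\varepsilon$ of $\Cz 1$. Since $\cZ$ has no characters, no unital $*$-homomorphism $\cZ\to\mathcal N$ has $\sigma$-fixed range. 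Hence $z\mapsto\theta^{-1}(1\otimes z)$ is not $\sigma$-invariant, and your closing claim that the equivariant isomorphism gives Step 1 immediately is wrong.

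\textbf{Your fallback.} This is the correct route and it is the paper's route, but as stated it has two defects.
\begin{enumerate}
\item The unital embedding $\cZ\to(\mathcal N_\infty\cap\mathcal N')^{\sigma_\infty}$ is not what \cite[Theorem~B]{GL} asserts. The paper derives it from the cocycle conjugacy using Szab\'o's equivariant McDuff theorem \cite[Corollary~3.8]{SzaboSSA}, and some such argument is needed.
\item A Choi--Effros lift yields only asymptotically multiplicative u.c.p.\ maps, not $*$-homomorphisms. So your ``equivalently'' is wrong, and the identity $\phi_n(z_iz_j)-\phi_n(z_i)\phi_n(z_j)=0$ in Step~3 fails. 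You must also make this multiplicativity defect small when choosing $\phi_n$ in Step~2; this is harmless.
\end{enumerate}

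\textbf{A more elementary repair.} This stays close to your original idea and avoids both citations. Identify $\cZ\cong\bigotimes_{\Nz}\cZ$, and let $\iota_k\colon\cZ\to\cZ$ be the unital embedding into the $k$th factor. Approximating $u$ by elements of $\mathcal N\otimes\cZ^{\otimes k_0}$ shows that $1\otimes\iota_k(z)$ asymptotically commutes with $u$. Then $\phi_k(z)=\theta^{-1}(1\otimes\iota_k(z))$ are genuine unital $*$-homomorphisms with $\|\sigma(\phi_k(z))-\phi_k(z)\|=\|[u,1\otimes\iota_k(z)]\|\to0$. With these $\phi_k$, your Steps~2 and~3 go through verbatim.
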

	
	\begin{proof}
		Since $\Zz$ is amenable and $\cZ$ is a finite strongly self-absorbing
		$C^*$-algebra \cite[Example 1.14(iv)]{TomsWinter}, the Bernoulli shift
		$\sigma$ is cocycle conjugate to $\sigma\otimes\id_{\cZ}$ by
		\cite[Theorem~B]{GL}. (As explained in \cite{GL}, the implication used
		here follows from work of Gardella--Hirshberg--Vaccaro, see \cite{GHV}, and of
		Sato \cite{SatoAmenable}. Since $\cN\cong\cZ$
		\cite[Corollary 1.11]{TomsWinter} has a unique tracial state, this
		cocycle conjugacy is also a special case of \cite[Theorem~8.5]{GHV} and
		\cite[Theorem~C]{SzaboSI}.)
		
		Let $\mathcal N_\infty=\ell^\infty(\Nz,\mathcal N)/c_0(\Nz,\cN)$,
		and let $\sigma_\infty$ be the automorphism induced by $\sigma$.
		Since $\mathcal N$ is separable, Szab\'o's equivariant McDuff-type theorem
		\cite[Corollary~3.8, (ii) $\Rightarrow$ (iii)]{SzaboSSA} gives a unital $*$-homomorphism
		$\pi:\cZ\to(\mathcal N_\infty\cap\mathcal N')^{\sigma_\infty}$; for
		unital $\mathcal N$ and the discrete group $\Zz$, the algebra
		$F_{\infty,\sigma}(\mathcal N)$ used there is
		$\mathcal N_\infty\cap\mathcal N'$. Only the $\sigma_\infty$-invariance
		of the range of $\pi$ is used below. As $\cZ$ is separable and
		nuclear, the Choi--Effros lifting theorem \cite{ChoiEffros} lifts $\pi$
		to unital completely positive maps $v_n:\cZ\to\mathcal N$, so that
		$\pi(z)=[(v_n(z))_n]$. Thus, for all $z,w\in\cZ$,
		\[
		\|v_n(zw)-v_n(z)v_n(w)\|\longrightarrow0,\qquad
		\|\sigma(v_n(z))-v_n(z)\|\longrightarrow0.
		\]
		
		Let $E_m:\mathcal N\to\mathcal N_{[-m,m]}$ be the unital completely
		positive conditional expectation that applies the tracial state of
		$\cZ$ to the factors outside $[-m,m]$; then $E_m(x)\to x$ for every
		$x\in\mathcal N$. Since $\cZ$ is separable and all maps involved are
		contractive, we may choose $m_n$ so that $\psi_n=E_{m_n}v_n$ satisfies
		$\epsilon_n(z)=\|\psi_n(z)-v_n(z)\|\to0$ for every $z\in\cZ$.
		Replacing $v_n$ by $\psi_n$ changes the two quantities displayed above
		by at most $\epsilon_n(zw)+\|w\|\epsilon_n(z)+\|z\|\epsilon_n(w)$ and
		$2\epsilon_n(z)$, respectively. Finally,
		$\|\sigma^r(x)-x\|\leq|r|\,\|\sigma(x)-x\|$ for all $r\in\Zz$ and
		$x\in\mathcal N$, since $\sigma$ is isometric.
	\end{proof}
	
	The finite-block maps can now be placed in interval subgroups approaching
	the left endpoint.
	\begin{proposition}\label{prop:insertion}
		If there is a unital homomorphism $\cZ\to C_\lambda^*(F)$, then
		$C_\lambda^*(F)\cong C_\lambda^*(F)\otimes\cZ$.
	\end{proposition}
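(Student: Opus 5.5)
Given a unital embedding $\theta\colon\cZ\to C_\lambda^*(F)$, the plan is to build a unital $*$-homomorphism $\cZ\to A_\infty\cap A'$ and then invoke \cite[Theorem~2.2]{TomsWinter}, as recalled in Subsection~\ref{subsec:Z-stability}.

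\emph{Step 1: localising the copy of $\cZ$.} Rescale $\theta$ onto the interval $I_0=[1/4,1/2]$: composing with the rescaling isomorphism $A\to A_{I_0}$ gives a unital copy $\theta_0\colon\cZ\to A_{I_0}$. For $j\geq0$ put $I_j=2^{-j}I_0=s^j(I_0)$ and $\theta_j=\Ad(\lambda_s^j)\circ\theta_0\colon\cZ\to A_{I_j}$. Indeed $s$ acts as $t\mapsto t/2$ on $[0,1/2]$, so $s^jF_{I_0}s^{-j}=F_{I_j}$. The intervals $I_j$ have pairwise disjoint interiors. By the disjoint-interval discussion preceding Proposition~\ref{prop:localZ}, for any finite window $[p,p+L]\subseteq\Nz$ we therefore obtain a unital isometric embedding
\[
\Theta_{[p,p+L]}\colon\bigotimes_{j=p}^{p+L}\cZ\to A,
\]
given by $\bigotimes_j\theta_j$. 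This uses that minimal tensor products of injective maps are injective and that the subalgebras $A_{I_j}$ realise their minimal tensor product inside $A$.

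\emph{Step 2: building $\rho_n$.} Take $\psi_n\colon\cZ\to\cN_{[-m_n,m_n]}$ from Lemma~\ref{lem:shiftblocks}. Choose shifts $p_n\geq m_n+n$, and let $\rho_n=\Theta\circ\sigma^{p_n}\circ\psi_n$, where coordinate $k$ goes to $I_{k+p_n}$. Each $\rho_n$ is ucp and asymptotically multiplicative, since $\Theta$ is an injective $*$-homomorphism on the window.

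For commutation, fix $g\in F$. Near $0$, $g(t)=2^{-r}t$ for some $r=r(g)$ on some $[0,\epsilon_g]$ with $\epsilon_g$ dyadic. Once $p_n-m_n$ is large enough that $I_{p_n-m_n-|r|}\subseteq[0,\epsilon_g]$, conjugation by $\lambda_g$ acts on each $F_{I_j}$ in the window exactly as conjugation by $\lambda_{s^{r}}$ does. Both are given by the same affine map there, so $\lambda_g\theta_j(z)\lambda_g^*=\theta_{j+r}(z)$. Hence
\[
\lambda_g\rho_n(z)\lambda_g^*=\Theta\bigl(\sigma^{r}\sigma^{p_n}\psi_n(z)\bigr).
\]
Taking $p_n-m_n\geq |r|$, the shifted window stays inside $\Nz$, so
\[
\|[\lambda_g,\rho_n(z)]\|=\|\sigma^r\psi_n(z)-\psi_n(z)\|\to0.
\]
This identity needs $\Theta$ to be isometric on the larger window $[p_n-m_n-|r|,\,p_n+m_n+|r|]$, which holds.

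The sequence $(\rho_n(z))$ is bounded, and the commutator estimate extends from the $\lambda_g$ to $\Cz[F]$ by linearity and to $A$ by density. Thus $z\mapsto[(\rho_n(z))_n]$ is a unital $*$-homomorphism $\cZ\to A_\infty\cap A'$; multiplicativity comes from Lemma~\ref{lem:shiftblocks}. Separability of $A$ and \cite[Theorem~2.2]{TomsWinter} conclude.

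\emph{Main obstacle.} The step needing the most care is the exact identity $\lambda_g\theta_j(\cdot)\lambda_g^*=\theta_{j+r}(\cdot)$ for large $j$. The first thing to check is that for every $h\in F_{I_j}$ with $I_j\subseteq[0,\epsilon_g]$ one has $ghg^{-1}=s^{r}hs^{-r}$. This holds because $g$ and $s^{r}$ agree on a neighbourhood of $I_j$ and $h$ is supported in $I_j$. The second is that the resulting identification is compatible with the tensor-product embeddings $\Theta$.
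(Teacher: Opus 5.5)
Your proposal is correct and is essentially the paper's proof: the same rescaling onto $I_0=[1/4,1/2]$ and copies $\Ad(\lambda_s^j)\circ\theta_0$ on $I_j=2^{-j}I_0$, and the same blocks $\psi_n$ from Lemma~\ref{lem:shiftblocks} placed on windows of indices at least $n$. As in the paper, you use that $g$ agrees with a power of $s$ on the far-out intervals, turning $\|[\lambda_g,\rho_n(z)]\|$ into $\|\sigma^{\pm r}\psi_n(z)-\psi_n(z)\|$, and you conclude with the Toms--Winter criterion. Your offset $p_n\geq m_n+n$ does the job of the paper's $L_n=m_n+2n$ with its expanded window, and your sign convention $g(t)=2^{-r}t$ (versus the paper's $2^{r(g)}t$ with $\sigma^{-r(g)}$) is immaterial.
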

	
	\begin{proof}
    We write $A=C_\lambda^*(F)$ as in the preliminaries.
		We construct a unital homomorphism from $\cZ$ into
		$A_\infty\cap A'$, using the notation of
		Subsection~\ref{subsec:Z-stability}.
		Since $\cZ$ is simple and $A\ne0$, the given unital homomorphism
		is injective. Compose it with the embedding
		$A\to A_{I_0}$ induced by affine rescaling, where $I_0=[1/4,1/2]$.
		This gives $\zeta_0:\cZ\to A_{I_0}$.
		Using $s$ from \eqref{eq:s-generator}, put $I_j=2^{-j}I_0$ and
		$\zeta_j=\Ad(\lambda_s^j)\zeta_0$ for $j\geq0$.
		The intervals have pairwise disjoint interiors, and
		$\zeta_j(\cZ)\subseteq A_{I_j}$. Restricting the disjoint-interval
		tensor embeddings to these subalgebras gives isometric embeddings of
		every finite tensor product of the copies of $\cZ$. Moreover,
		\begin{equation}\label{eq:intervalshift}
			\Ad(\lambda_s)\zeta_j=\zeta_{j+1}.
		\end{equation}
		
		Choose $m_n$ and $\psi_n$ as in Lemma~\ref{lem:shiftblocks}, and
		set $L_n=m_n+2n$. Embed the expanded window
		\[
		\Theta_n:\mathcal N_{[-m_n-n,m_n+n]}\longrightarrow A
		\]
		by placing coordinate $k$ in $\zeta_{L_n+k}(\cZ)$.
		Every interval index is at least $n$, and the range of $\psi_n$
		lies in the smaller window $\mathcal N_{[-m_n,m_n]}$. The maps
		$\rho_n=\Theta_n\psi_n:\cZ\to A$ are unital completely positive
		and asymptotically multiplicative by Lemma~\ref{lem:shiftblocks}.
		
		For every fixed $g\in F$, finite piecewise linearity gives
		$r(g)\in\Zz$ and $\varepsilon_g>0$ with
		$g(t)=2^{r(g)}t$ on $[0,\varepsilon_g]$.
		For all sufficiently large interval indices $j$, the maps $g$
		and $s^{-r(g)}$ agree on $I_j$. The conjugate of an element
		of $F_{I_j}$ is supported in their common image interval. On that
		interval the two conjugations agree, and outside it both are the
		identity. Consequently,
		\[
		\Ad(\lambda_g)\zeta_j(z)=\zeta_{j-r(g)}(z).
		\]
		For $n>|r(g)|$ sufficiently large, the smaller window
		$\mathcal N_{[-m_n,m_n]}$ and its shift by $-r(g)$ lie in the
		expanded window, all relevant intervals are in the linearity region,
		and all indices are nonnegative. Hence isometry gives
		\begin{equation}\label{eq:endpointcentral}
			\|[\lambda_g,\rho_n(z)]\|
			=\|\sigma^{-r(g)}(\psi_n(z))-\psi_n(z)\|
			\longrightarrow0
		\end{equation}
		by Lemma~\ref{lem:shiftblocks}.
		For a finitely supported element, sum these estimates over its finite support.
		For arbitrary $a\in A$, approximate by such a polynomial $p$
		and use $\|[a-p,\rho_n(z)]\|\leq2\|a-p\|\|z\|$.
		Thus $\|[a,\rho_n(z)]\|\to0$ for every $a\in A$ and $z\in\cZ$.
		
		Asymptotic multiplicativity of the unital completely positive maps $\rho_n$
		implies that $z\mapsto[(\rho_n(z))_n]$ is a unital homomorphism into
		$A_\infty$, and the commutator estimates place its range in
		$A_\infty\cap A'$. Toms--Winter's absorption criterion
		\cite[Theorem 2.2]{TomsWinter} now gives
		$A\cong A\otimes_{\min}\cZ$.
	\end{proof}
	
	\begin{proof}[Proof of Theorem~\ref{thm:Z-stability}]
		Proposition~\ref{prop:localZ} supplies a unital copy of $\cZ$ in $C_\lambda^*(F)$.
		Proposition~\ref{prop:insertion} then gives
		$C_\lambda^*(F)\cong C_\lambda^*(F)\otimes_{\min}\cZ$.
	\end{proof}

	\section{Quasitrace rigidity and the proof of selflessness}
	
	We prove uniqueness of the normalised quasitrace on $C_\lambda^*(F)$ and combine it
	with Theorem~\ref{thm:Z-stability} to obtain strict comparison and
	selflessness.
	
	\subsection{Quasitraces on the commutator subgroup}\label{sec:restriction}
	
	The following lemma shows how commuting conjugates force quasitraces
	to be additive. For $F'$, the required disjoint-support
	displacement appears in \cite[proof of Proposition~2.1]{Jolissaint2005};
	related generator shifts and their local inner implementation are
	described in \cite[Lemma~4.1 and Remark~4.3(i)]{HaagerupPicioroaga}.
	
	\begin{lemma}\label{lem:commutingconjugates}
		Let $G$ be a group such that for every finite set $S\subseteq G$ there exists $g\in G$ such that every element of $S$ commutes with every element of $gSg^{-1}$. Let $D$ be a unital $C^*$-algebra and $\pi\colon G\to\mathcal U(D)$ a homomorphism into the unitary group of $D$ whose range has dense linear span. Then, every quasitrace $q\in\QT_1(D)$ is a tracial state on $D$.
	\end{lemma}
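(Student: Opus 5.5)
The plan is to show that $q$ is linear on selfadjoint elements; by (Q2) this gives linearity on all of $D$. Positivity and $q(1)=1$ are already part of $q\in\QT_1(D)$, so $q$ is then a tracial state. Fix selfadjoint contractions $x,y\in D$ and $\varepsilon>0$; it suffices to show $|q(x+y)-q(x)-q(y)|\le C\varepsilon$ for a constant $C$ independent of $\varepsilon$. Let $\delta>0$ be as in the approximate additivity property (d), applied to $q$ and $\varepsilon$. By norm continuity of $q$ and density of $\operatorname{span}\pi(G)$, we may replace $x,y$ by selfadjoint elements $x_0,y_0$ of that span, at the cost of errors that vanish in the limit. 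These elements are supported on a finite set $S\subseteq G$, which we may take symmetric and containing $1$. After rescaling, $x_0$ and $y_0$ are contractions (the rescaling affects only the constants, via the homogeneity $q(tz)=tq(z)$ for $t\ge0$, which follows from (Q3)).

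Next we build many commuting conjugate copies by iterating the hypothesis. Set $S_1=S$ and pick $g_1$ such that $S$ commutes with $g_1Sg_1^{-1}$. Then let $S_2=S\cup g_1Sg_1^{-1}$ and pick $g_2$ such that $S_2$ commutes with $g_2S_2g_2^{-1}$, and so on. This produces elements $h_1=1,h_2,\dots,h_N$, products of the $g_i$'s, such that the sets $h_iSh_i^{-1}$ pairwise commute elementwise.

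Put $u_i=\pi(h_i)$, and define the averages
\[
X=\tfrac1N\sum_i u_ix_0u_i^*,\qquad Y=\tfrac1N\sum_i u_iy_0u_i^*.
\]
For $i\ne j$ the terms $u_ix_0u_i^*$ and $u_jy_0u_j^*$ commute, so
\[
[X,Y]=\tfrac1{N^2}\sum_i u_i[x_0,y_0]u_i^*,
\]
which has norm at most $2/N<\delta$ for $N$ large. Moreover $X$, $Y$ and $X+Y$ are averages of commuting conjugates of $x_0$, $y_0$ and $x_0+y_0$ respectively. The family $\{u_ix_0u_i^*\}$ generates an abelian $C^*$-algebra, so by (Q3) and unitary invariance $q(X)=q(x_0)$; likewise $q(Y)=q(y_0)$ and $q(X+Y)=q(x_0+y_0)$. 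Applying (d) to the contractions $X,Y$ then gives $|q(x_0+y_0)-q(x_0)-q(y_0)|<\varepsilon$.

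Finally let $\varepsilon\to0$ and pass back to $x,y$ by continuity. This yields additivity on selfadjoints, and real homogeneity follows from (Q3) on the abelian algebra $C^*(x)$.

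The step I expect to need the most care is the construction of $N$ pairwise commuting conjugates. A single application of the hypothesis only gives two commuting copies, so the hypothesis has to be applied to the growing union $S_k$. One must then check that conjugating by $g_k$ keeps the earlier copies commuting with the new ones: $g_kS_kg_k^{-1}$ commutes with all of $S_k$, which contains every earlier copy $h_iSh_i^{-1}$, and the new copy $g_kh_iSh_i^{-1}g_k^{-1}$ is contained in $g_kS_kg_k^{-1}$. A secondary point is confirming that (d) applies with $\delta$ depending only on $q$ and $\varepsilon$, uniformly over all selfadjoint contractions, which is exactly how (d) is stated.
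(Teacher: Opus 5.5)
Your proposal is correct and follows essentially the same route as the paper: iterate the hypothesis on the growing union of conjugates (the paper's doubling $h_{N+i}=gh_i$), average over the commuting copies so that $q$-values are preserved while commutators shrink like $1/N$, invoke approximate additivity (d), and finish by density and norm continuity. The differences are cosmetic. The paper proves exact additivity on the closed algebras $D_{\langle S\rangle}$, using that centralisers are subgroups, before passing to $D$, whereas you work directly with span elements. The new-versus-new commutation in your iteration, which you leave unchecked, holds because it is the old-versus-old commutation conjugated by $g_k$. The trace identity $q(xy)=q(yx)$, which you leave implicit, follows from (Q1) by polarization once linearity is known.
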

	\begin{proof}
		For a subgroup $H\leq G$, let $D_H$ be the $C^*$-subalgebra of $D$
		generated by $\pi(H)$.
		
		\emph{Step 1: pairwise commuting conjugates.} Let $S\subseteq G$ be
		finite. We show by induction that for every $k\geq0$ there are
		$h_1,\ldots,h_{2^k}\in G$ such that, whenever $i\neq j$, every element
		of $h_iSh_i^{-1}$ commutes with every element of $h_jSh_j^{-1}$. For
		$k=0$, take $h_1=1$. Given $h_1,\ldots,h_N$ with $N=2^k$, apply the
		hypothesis to the finite set $S'=\bigcup_{i=1}^Nh_iSh_i^{-1}$. This gives
		$g\in G$ such that every element of $S'$ commutes with every element of
		$gS'g^{-1}$. Put $h_{N+i}=gh_i$ for $1\leq i\leq N$. Pairs of distinct
		indices in $\{1,\ldots,N\}$ are covered by the inductive hypothesis, and
		pairs of distinct indices in $\{N+1,\ldots,2N\}$ by its conjugate under
		$g$. If $i\leq N<j$, then $h_iSh_i^{-1}\subseteq S'$ and
		$h_jSh_j^{-1}\subseteq gS'g^{-1}$, which commute elementwise by the
		choice of $g$.
		
		Let $H=\langle S\rangle$, $H_i=h_iHh_i^{-1}$ and $u_i=\pi(h_i)$.
		Centralisers are subgroups, so $H_i$ and $H_j$ commute elementwise for
		$i\neq j$. Since $\pi(H_i)$ and $\pi(H_j)$ commute elementwise and
		are closed under adjoints, the $C^*$-subalgebras
		$D_{H_i}=u_iD_Hu_i^*$, $1\leq i\leq2^k$, commute pairwise.
		
		\emph{Step 2: additivity on $D_{\langle S\rangle}$.} Fix
		$q\in\QT_1(D)$, a finite set $S\subseteq G$, and selfadjoint
		contractions $x,y\in D_H$, where $H=\langle S\rangle$. For $k\geq0$, let
		$N=2^k$, choose $h_1,\ldots,h_N$ as in Step~1, and set
		\[
		x_i=u_ixu_i^*,\qquad y_i=u_iyu_i^*,\qquad
		X_k=\frac1N\sum_{i=1}^Nx_i,\qquad Y_k=\frac1N\sum_{i=1}^Ny_i.
		\]
		For $i\neq j$, both $x_i$ and $y_i$ commute with both $x_j$ and
		$y_j$. In particular, $x_1,\ldots,x_N$ are pairwise commuting
		selfadjoint elements, so they lie in an abelian $C^*$-subalgebra of
		$D$, on which $q$ is linear. By unitary invariance, recalled in
		Subsection~\ref{subsec:quasitraces}, $q(X_k)=q(x)$.
		In the same way $q(Y_k)=q(y)$, and the pairwise
		commuting elements $x_i+y_i=u_i(x+y)u_i^*$ give $q(X_k+Y_k)=q(x+y)$.
		In the commutator, only terms with equal indices survive:
		\[
		[X_k,Y_k]=\frac1{N^2}\sum_{i,j=1}^N[x_i,y_j]
		=\frac1{N^2}\sum_{i=1}^Nu_i[x,y]u_i^*,
		\qquad\text{so}\qquad
		\|[X_k,Y_k]\|\leq\frac{\|[x,y]\|}{2^k}.
		\]
		The elements $X_k,Y_k$ are selfadjoint contractions and their commutator
		norms tend to zero. Thus \eqref{eq:almostcommuting} gives
		\[
		|q(x+y)-q(x)-q(y)|
		=|q(X_k+Y_k)-q(X_k)-q(Y_k)|\longrightarrow0.
		\]
		Hence $q(x+y)=q(x)+q(y)$. Since $q$ is homogeneous on each
		$C^*(1,z)$ with $z$ selfadjoint, this identity extends to all
		selfadjoint $x,y\in D_H$.
		
		\emph{Step 3: additivity on $D$.} If $S\subseteq S'$, then
		$D_{\langle S\rangle}\subseteq D_{\langle S'\rangle}$. Hence the union
		$D_0$ of the algebras $D_{\langle S\rangle}$, over all finite
		$S\subseteq G$, is directed and is a $*$-subalgebra of $D$. It
		contains $\pi(G)$, so it is dense in $D$, and real parts of
		approximants show that $(D_0)_{\mathrm{sa}}$ is dense in
		$D_{\mathrm{sa}}$. Any two selfadjoint elements of $D_0$ lie in a common
		$D_{\langle S\rangle}$, so $q$ is additive on $(D_0)_{\mathrm{sa}}$ by
		Step~2. Norm continuity from Subsection~\ref{subsec:quasitraces} then
		extends additivity to $D_{\mathrm{sa}}$.
		
		\emph{Step 4: conclusion.}
        Homogeneity on selfadjoint elements and (Q2) show that $q$ is complex linear. By (Q1) and $q(1)=1$, it is a state. Finally, (Q1) and polarization imply $q(xy)=q(yx)$ for all $x,y\in D$, so $q$ is tracial.
	\end{proof}
	
	\begin{proposition}\label{prop:restriction}
		Every normalised quasitrace on $C_\lambda^*(F')$ is a trace.
	\end{proposition}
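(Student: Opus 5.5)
We apply Lemma~\ref{lem:commutingconjugates} with $G=F'$, $D=B=C_\lambda^*(F')$ and $\pi$ the left regular representation $g\mapsto\lambda_g$. The span of $\pi(F')$ is the group algebra $\Cz[F']$, which is dense in $B$ by definition of the reduced group $C^*$-algebra. Once the displacement hypothesis on $F'$ is checked, the lemma says every $q\in\QT_1(B)$ is a tracial state.

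To verify the hypothesis, let $S\subseteq F'$ be finite. By Lemma~\ref{lem:F'}, each element of $S$ is trivial near $0$ and near $1$. Since $S$ is finite, we can choose dyadic points $0<\alpha<\beta<1$ such that every $h\in S$ is the identity outside $K=[\alpha,\beta]$. Now choose a nondegenerate closed interval $J\subseteq(0,1)$ with dyadic endpoints and disjoint from $K$, for example $J\subseteq(\beta,1)$. Lemma~\ref{lem:interval-moving} gives $g\in F'$ with $g(K)=J$.

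For $h\in S$, the conjugate $ghg^{-1}$ fixes every point outside $g(K)=J$. So every element of $S$ is supported in $K$, every element of $gSg^{-1}$ is supported in $J$, and $K\cap J=\emptyset$. Two homeomorphisms of $[0,1]$ whose supports (the closures of their non-fixed sets) lie in disjoint closed intervals commute. To see this, check $h_1h_2(t)=h_2h_1(t)$ pointwise in three cases: $t\in K$, $t\in J$, or $t$ outside both. In each case one of the two maps fixes $t$ and the other preserves its own interval. Hence every element of $S$ commutes with every element of $gSg^{-1}$.

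The only point needing care is that the supporting interval $K$ must lie in the open interval $(0,1)$, so that Lemma~\ref{lem:interval-moving} applies and the moving element $g$ can be taken in $F'$ rather than merely in $F$. Lemma~\ref{lem:F'} guarantees this by placing $K$ strictly inside $(0,1)$. The disjoint target $J$ then exists because $\beta<1$. With the hypothesis verified, Lemma~\ref{lem:commutingconjugates} completes the proof.
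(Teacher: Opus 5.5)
Your proof is correct and follows the paper's argument essentially step for step: you use Lemma~\ref{lem:F'} to confine the supports of $S$ to a dyadic interval $K\subseteq(0,1)$, move $K$ to a disjoint dyadic interval $J$ by some $g\in F'$ from Lemma~\ref{lem:interval-moving}, get elementwise commutation of $S$ and $gSg^{-1}$ from the disjoint supports, and conclude with Lemma~\ref{lem:commutingconjugates} applied to $h\mapsto\lambda_h$. Your pointwise check that disjointly supported homeomorphisms commute spells out a step the paper states in one line.
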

	\begin{proof}
		Let $S\subseteq F'$ be finite. By Lemma~\ref{lem:F'}, every element
		of $S$ is the identity near both endpoints, so their supports lie in
		a nondegenerate closed dyadic interval $K\subseteq(0,1)$. Choose a
		nondegenerate closed dyadic interval $J\subseteq(0,1)$ disjoint from
		$K$. Lemma~\ref{lem:interval-moving} gives $g\in F'$ with $g(K)=J$,
		so every element of $gSg^{-1}$ is supported in $J$. A homeomorphism
		supported in a closed interval maps that interval onto itself. Hence
		homeomorphisms supported in disjoint closed intervals commute, and
		every element of $S$ commutes with every element of $gSg^{-1}$.
		
		Lemma~\ref{lem:commutingconjugates}, applied to
		$F'\to\mathcal U(C_\lambda^*(F'))$, $h\mapsto \lambda_h$, shows that every $q\in\QT_1(C_\lambda^*(F'))$ is a tracial state.
	\end{proof}
	
	Consequently, every normalised quasitrace on $C_\lambda^*(F)$ restricts to
	$\tau_{F'}$ on $C_\lambda^*(F')$, by the uniqueness of the tracial state on $C_\lambda^*(F')$
	recorded after Theorem~\ref{thm:LBMB}. It remains to show that this
	restriction determines the quasitrace on all of $C_\lambda^*(F)$.
	
	\subsection{Endpoint shifts and continuous towers}\label{sec:towers}
	
	We construct the endpoint shift algebras and continuous Rokhlin towers
	used in Subsection~\ref{sec:compression}. Endpoint localisation
	provides exact commutation with prescribed coefficients in $\Cz[F']$.
	
	Set $\rho_k=\Ad(W_k)|_{C_\lambda^*(F')}$ for $k\in\Zz^2$. Since $k\mapsto W_k$
	is a unitary representation, these automorphisms form an action of
	$\Zz^2$ and preserve $\tau_{F'}$. For $g\in F$, let
	$\operatorname{supp}(g)$ denote the closure of $\{t:g(t)\ne t\}$.
	
	\subsubsection*{Endpoint shifts}
	
	\begin{lemma}\label{lem:endpoint-bumps}
		There exist nonidentity elements $z,w\in F'$ such that, on setting
		\[
		z_j=u^jzu^{-j},\qquad w_j=v^jwv^{-j}
		\qquad(j\in\Zz),
		\]
		all the supports of the $z_j$ and $w_j$ are pairwise disjoint, and
		\[
		H:=\langle z_j,w_j:j\in\Zz\rangle
		\cong
		\left(\bigoplus_{j\in\Zz}\Zz\right)
		\oplus
		\left(\bigoplus_{j\in\Zz}\Zz\right),
		\]
		with the displayed elements as the standard generators. Moreover,
		\[
		uz_ju^{-1}=z_{j+1},\qquad uw_ju^{-1}=w_j,
		\qquad
		vz_jv^{-1}=z_j,\qquad vw_jv^{-1}=w_{j+1}.
		\]
		For every $0<\varepsilon<1/2$, there exists $N$ such that
		\[
		\operatorname{supp}(z_j)\subseteq(0,\varepsilon),\qquad
		\operatorname{supp}(w_j)\subseteq(1-\varepsilon,1)
		\qquad(j\geq N).
		\]
	\end{lemma}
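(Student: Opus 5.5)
The plan is to build $z$ and $w$ by hand, using the dynamics of $u$ on $(0,1/2)$ and of $v$ on $(1/2,1)$. Disjointness of supports, commutation, the free abelian structure and the endpoint localisation should then all follow from elementary support considerations.

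\emph{Dynamics of $u$ and $v$.} From the formula for $s$, on $[0,1/2]$ we get $u(t)=t/2$ for $t\in[0,1/4]$, $u(t)=t-1/8$ on $[1/4,3/8]$ and $u(t)=2t-1/2$ on $[3/8,1/2]$. Moreover $u$ is the identity on $[1/2,1]$. Hence $u$ fixes $0$ and $1/2$, satisfies $u(t)<t$ on $(0,1/2)$, and has $u(1/4)=1/8$. So the half-open interval $[1/8,1/4)$ is a fundamental domain for $\langle u\rangle$ acting on $(0,1/2)$. The open intervals $u^j\bigl((1/8,1/4)\bigr)$, $j\in\Zz$, are then pairwise disjoint, and $u^j\bigl([1/8,1/4]\bigr)=[2^{-j-3},2^{-j-2}]$ for $j\geq0$.

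Symmetrically, $v$ is the identity on $[0,1/2]$ and is an affine copy of $s^{-1}$ on $[1/2,1]$. Since $s(t)<t$ on $(0,1)$, we get $v(t)>t$ on $(1/2,1)$, and a direct computation gives $v(3/4)=7/8$. Also $v(t)=(1+t)/2$ near $1$, because $s^{-1}(t)=(t+1)/2$ near $1$. Thus the open intervals $v^j\bigl((3/4,7/8)\bigr)$ are pairwise disjoint, and for $j\geq0$ they lie in $[1-2^{-j-2},1-2^{-j-3}]$.

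\emph{Choice of $z$ and $w$.} Choose closed dyadic intervals $K\subseteq(1/8,1/4)$ and $L\subseteq(3/4,7/8)$. Take nonidentity elements $z\in F_K$ and $w\in F_L$; these exist because affine rescaling gives $F_K\cong F$ and $F_L\cong F$. Each is the identity near $0$ and $1$, so $z,w\in F'$ by Lemma~\ref{lem:F'}. Since $F'$ is normal, every $z_j$ and $w_j$ also lies in $F'$. The supports satisfy $\operatorname{supp}(z_j)=u^j(\operatorname{supp}z)\subseteq u^j\bigl((1/8,1/4)\bigr)\subseteq(0,1/2)$ and $\operatorname{supp}(w_j)\subseteq v^j\bigl((3/4,7/8)\bigr)\subseteq(1/2,1)$. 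All these supports are therefore pairwise disjoint. The endpoint statement follows from the explicit formulas for $j\geq0$: given $\varepsilon$, pick $N$ with $2^{-N-2}<\varepsilon$.

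\emph{Conjugation relations.} The identities $uz_ju^{-1}=z_{j+1}$ and $vw_jv^{-1}=w_{j+1}$ hold by definition. The identities $uw_ju^{-1}=w_j$ and $vz_jv^{-1}=z_j$ hold because $u$ (respectively $v$) is the identity on a neighbourhood of $\operatorname{supp}(w_j)\subseteq(1/2,1)$ (respectively of $\operatorname{supp}(z_j)\subseteq(0,1/2)$). Homeomorphisms with disjoint closed supports commute, so $H$ is abelian.

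\emph{Free abelian structure.} It remains to show that the canonical surjection $\bigoplus\Zz\oplus\bigoplus\Zz\to H$ is injective. Take a finite product $g=\prod z_j^{a_j}w_j^{b_j}$. Its restriction to $\operatorname{supp}(z_j)$ coincides with $z_j^{a_j}$, since every other factor is the identity there. Now $F$ is torsion-free: a nontrivial element moves some point, and its orbit is then strictly monotone. Hence $z_j^{a_j}\neq1$ whenever $a_j\neq0$, and the same argument applies to the $w_j$. So $g=1$ forces all exponents to vanish.

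The main point requiring care is the computation of fundamental domains for $u$ and $v$. Once disjointness of supports is secured, the rest is routine.
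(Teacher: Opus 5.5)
Your proof is correct and follows essentially the same route as the paper. You place the bumps in the $\langle u\rangle$- and $\langle v\rangle$-translates of $(1/8,1/4)$ and $(3/4,7/8)$ and read off commutation, freeness and the conjugation relations from disjoint supports; the only difference is that you get the endpoint localisation from the explicit formulas $u(t)=t/2$ and $v(t)=(1+t)/2$ near $0$ and $1$, whereas the paper uses convergence of $u^j(1/4)$ and $v^j(3/4)$ to the fixed points $0$ and $1$. One small point: affine rescaling gives $F_K\cong F$ only when the length of $K$ is a power of $2$, so choose, for example, $K=[5/32,6/32]$ and $L=[26/32,27/32]$ as the paper does, or cite \cite[Lemma~4.4]{CFP}.
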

	
	\begin{proof}
		We use the disjoint-conjugate construction of
		\cite[Lemma~2.8]{Brin1999Ubiquity}. Choose nonidentity $z,w\in F$
		supported in $[5/32,6/32]$ and $[26/32,27/32]$, respectively;
		these exist by \cite[Lemma~4.4]{CFP}.
		
		The definitions of \(u\) and \(v\) give
		\[
		u(1/4)=1/8,\qquad v(3/4)=7/8,
		\]
		and \(u(t)<t\) on \((0,1/2)\), while \(v(t)>t\) on \((1/2,1)\).
		The intervals
		\[
		\bigl(u^{j+1}(1/4),u^j(1/4)\bigr),\qquad j\in\Zz,
		\]
		are pairwise disjoint and contain the respective supports of \(z_j\).
		Likewise, the pairwise disjoint intervals
		\[
		\bigl(v^j(3/4),v^{j+1}(3/4)\bigr),\qquad j\in\Zz,
		\]
		contain the respective supports of \(w_j\). The two families lie in
		different halves of \([0,1]\). Each conjugate is the identity near both
		endpoints, hence belongs to \(F'\) by \cite[Theorem~4.1]{CFP}.
		
		A nonidentity increasing interval homeomorphism has infinite order: the iterates of a point that it moves form a strictly monotone sequence.
		Thus every \(z_j\) and \(w_j\) has infinite order. Disjoint supports
		give commutation, and restricting a finite product to each supporting
		interval shows that it is the identity only when every exponent is zero.
		This proves the asserted direct-sum decomposition. The conjugation
		relations follow from the definitions and the separation of the two halves.
		
		Finally, \(u^j(1/4)\) decreases to a fixed point of \(u\), necessarily
		zero, and \(v^j(3/4)\) increases to a fixed point of \(v\), necessarily
		one. The support inclusions above give the final assertion.
	\end{proof}
	
	\begin{lemma}\label{lem:endpoint-shift-algebras}
		Let \(X=\Tz^{\Zz}\), let \(q_j\in C(X)\) be the coordinate unitary
		\(q_j(x)=x_j\), and define \(\sigma\in\operatorname{Aut}(C(X))\) by
		\(\sigma(q_j)=q_{j+1}\). Let \(\mu\) be product Haar measure on \(X\).
		For the elements of Lemma~\ref{lem:endpoint-bumps}, there is a unital
		injective \(*\)-homomorphism
		\[
		\Theta:C(X)\otimes C(X)\longrightarrow C_\lambda^*(F')
		\]
		satisfying
		\[
		\Theta(q_j\otimes1)=\lambda_{z_j},\qquad
		\Theta(1\otimes q_j)=\lambda_{w_j}.
		\]
		It is equivariant and trace preserving, in the sense that
		\[
		\rho_{(m,n)}\circ\Theta
		=\Theta\circ(\sigma^m\otimes\sigma^n)
		\qquad((m,n)\in\Zz^2)
		\]
		and
		\[
		\tau_{F'}(\Theta(f\otimes g))
		=\left(\int_X f\,d\mu\right)
		\left(\int_X g\,d\mu\right)
		\qquad(f,g\in C(X)).
		\]
	\end{lemma}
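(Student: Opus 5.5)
The plan is to identify $C(X)\otimes C(X)$ with $C^*(\Zz^{(\Zz)}\oplus\Zz^{(\Zz)})$ and to realise $\Theta$ as the inclusion $C_\lambda^*(H)\subseteq C_\lambda^*(F')$ of a reduced group $C^*$-algebra of a subgroup. By Lemma~\ref{lem:endpoint-bumps}, $H$ is free abelian on the generators $z_j,w_j$ ($j\in\Zz$). Since $H$ is abelian, $C_\lambda^*(H)=C^*(H)\cong C(\widehat H)$. Moreover,
\[
\widehat H=\Tz^{\Zz}\times\Tz^{\Zz}=X\times X,
\]
where a character is determined by its values on the generators. Under this Fourier isomorphism $\lambda_{z_j}\mapsto q_j\otimes1$ and $\lambda_{w_j}\mapsto1\otimes q_j$, and the identification $C(X\times X)=C(X)\otimes C(X)$ is the standard one. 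This gives a unital $*$-isomorphism $C(X)\otimes C(X)\to C_\lambda^*(H)$ with the prescribed values on generators. Composing with the isometric unital embedding $C_\lambda^*(H)\subseteq C_\lambda^*(F')$ from the preliminaries (restriction of the regular representation to a subgroup) yields an injective unital $\Theta$. Injectivity is automatic, but it also follows from faithfulness of the trace computed below.

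For trace preservation, I will check the identity on the dense $*$-subalgebra spanned by monomials $\prod_j q_j^{a_j}\otimes\prod_j q_j^{b_j}$ with finitely supported exponents, and then conclude by continuity and bilinearity. Such a monomial is sent to $\lambda_h$ with
\[
h=\prod_j z_j^{a_j}\prod_j w_j^{b_j}.
\]
By the direct sum decomposition, $h=1$ exactly when all exponents vanish. Hence $\tau_{F'}(\lambda_h)$ equals $1$ if all $a_j,b_j$ are zero and $0$ otherwise. On the other side, $\int_X\prod_j q_j^{a_j}\,d\mu=\prod_j\int_\Tz x^{a_j}\,dx$, which is also $1$ or $0$ according to whether all $a_j$ vanish. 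The product of the two integrals therefore matches $\tau_{F'}(\lambda_h)$.

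For equivariance, it suffices to check the identity on the generators $q_j\otimes1$ and $1\otimes q_j$, since both sides are $*$-homomorphisms. Note that $W_{(m,n)}=\lambda_{u^mv^n}$. Using the conjugation relations of Lemma~\ref{lem:endpoint-bumps} together with the fact that $u$ and $v$ commute, we get $u^mv^n z_j v^{-n}u^{-m}=z_{j+m}$ and $u^mv^n w_j v^{-n}u^{-m}=w_{j+n}$. Hence
\[
\rho_{(m,n)}\Theta(q_j\otimes1)=\lambda_{z_{j+m}}=\Theta(\sigma^m q_j\otimes1),
\]
and likewise for the second family.

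The only point requiring some care is the Fourier identification of $C^*(\Zz^{(\Zz)})$ with $C(\Tz^{\Zz})$ for an infinitely generated group. I would handle it via Pontryagin duality for discrete abelian groups, or alternatively by writing both sides as inductive limits over finite windows $[-M,M]$, where it reduces to $C^*(\Zz^{d})\cong C(\Tz^{d})$. No step here is a genuine obstacle.
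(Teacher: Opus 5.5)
Your proposal is correct and follows essentially the same route as the paper. Both realise $\Theta$ as the Fourier identification $C_\lambda^*(H)\cong C(X\times X)$ composed with the subgroup embedding $C_\lambda^*(H)\subseteq C_\lambda^*(F')$; the paper builds that identification from compatible finite windows $\Tz^Q\times\Tz^R$, which is your stated alternative. Both then check the trace formula on characters and equivariance on the coordinate unitaries, using the conjugation relations of Lemma~\ref{lem:endpoint-bumps}.
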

	
	\begin{proof}
		By the subgroup inclusion recorded in Section~2.1, the subgroup
		$H$ of Lemma~\ref{lem:endpoint-bumps} gives an isometric unital
		embedding $C_\lambda^*(H)\subseteq C_\lambda^*(F')$.
		
		For finite sets \(Q,R\subseteq\Zz\), the corresponding generators form
		a free abelian subgroup of rank \(|Q|+|R|\). Fourier transformation
		identifies its reduced group \(C^*\)-algebra with
		\(C(\Tz^Q\times\Tz^R)\), sending the group generators to coordinate
		unitaries. These identifications are compatible as \(Q,R\) increase.
		Since finite-coordinate functions are uniformly dense in \(C(X\times X)\),
		they give the asserted embedding \(\Theta\).
		
		On every finite-coordinate algebra, the canonical group trace is
		integration against Haar measure: both vanish on every nonconstant
		character. Density therefore gives the displayed trace formula.
		The conjugation relations of Lemma~\ref{lem:endpoint-bumps} establish
		equivariance on the coordinate unitaries, which generate the domain.
	\end{proof}
	
	\begin{lemma}\label{lem:endpoint-localisation}
		For finite \(Q\subseteq\Zz\), write
		\[
		D_Q=C^*(1,q_j:j\in Q)\subseteq C(X).
		\]
		Given a finite set \(\mathcal E\subseteq\Cz[F']\) and finite sets
		\(Q,R\subseteq\Zz\), there exists \(N\geq0\) such that
		\[
		\Theta\bigl(D_{Q+m}\otimes_{\min}D_{R+n}\bigr)
		\subseteq C_\lambda^*(F')\cap C^*(\mathcal E)'
		\qquad(m,n\in\Zz,\ m,n\geq N),
		\]
		where \(Q+m=\{j+m:j\in Q\}\), and similarly for \(R+n\).
	\end{lemma}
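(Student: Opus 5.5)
The plan is to reduce everything to disjointness of supports. Since $\mathcal E\subseteq\Cz[F']$ is finite, only finitely many group elements occur in the elements of $\mathcal E$; call this finite set $T\subseteq F'$. Then $C^*(\mathcal E)\subseteq C^*(\lambda_h:h\in T)$, so it suffices to show that $\Theta(D_{Q+m}\otimes_{\min}D_{R+n})$ commutes with $\lambda_h$ for every $h\in T$. By Lemma~\ref{lem:F'}, each $h\in T$ is the identity near $0$ and near $1$. Hence there is $0<\varepsilon<1/2$ such that every $h\in T$ is the identity on $[0,\varepsilon]\cup[1-\varepsilon,1]$, i.e.\ $\operatorname{supp}(h)\subseteq[\varepsilon,1-\varepsilon]$.

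Next apply the final assertion of Lemma~\ref{lem:endpoint-bumps} with this $\varepsilon$, which gives $N_0$ such that $\operatorname{supp}(z_j)\subseteq(0,\varepsilon)$ and $\operatorname{supp}(w_j)\subseteq(1-\varepsilon,1)$ for all $j\geq N_0$. Put $N=N_0+\max\bigl(\{0\}\cup\{-\min Q,-\min R\}\bigr)$, with the convention that an empty $Q$ or $R$ imposes no condition. Then for $m,n\geq N$ every index in $Q+m$ and in $R+n$ is at least $N_0$. Consequently, for $j\in Q+m$ and $j'\in R+n$, the supports of $z_j$ and $w_{j'}$ are disjoint from the supports of all $h\in T$. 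Homeomorphisms with disjoint supports commute, as in the proof of Proposition~\ref{prop:restriction}. Therefore $\lambda_{z_j}$ and $\lambda_{w_{j'}}$ commute with each $\lambda_h$, $h\in T$, and so do their adjoints.

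To finish, note that by Lemma~\ref{lem:endpoint-shift-algebras}, $\Theta(D_{Q+m}\otimes_{\min}D_{R+n})$ is the $C^*$-algebra generated by $1$, the $\lambda_{z_j}$ with $j\in Q+m$, and the $\lambda_{w_{j'}}$ with $j'\in R+n$. Here we use that $\Theta$ is a $*$-homomorphism and that $D_{Q}\otimes D_R$ is generated by the elements $q_j\otimes1$ and $1\otimes q_{j'}$. The commutant of a set is a $C^*$-algebra, so this image lies in $C^*(\mathcal E)'$, and it lies in $C_\lambda^*(F')$ because $\Theta$ maps into $C_\lambda^*(F')$.

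There is no real obstacle. The only points needing care are the bookkeeping for $N$ when $Q$ or $R$ contains negative indices, and the remark that support in $[\varepsilon,1-\varepsilon]$ versus $(0,\varepsilon)$ still gives commutation. The latter holds because the boundary point $\varepsilon$ is fixed by both maps, so each map preserves the other's support region.
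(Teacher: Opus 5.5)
Your proposal is correct and is essentially the paper's own argument. Like the paper, you collect the finitely many group elements occurring in $\mathcal E$, place their supports in $[\varepsilon,1-\varepsilon]$ via Lemma~\ref{lem:F'}, push the shifted bumps into $(0,\varepsilon)$ and $(1-\varepsilon,1)$ by the final assertion of Lemma~\ref{lem:endpoint-bumps}, and pass from commuting generators and their adjoints to the generated $C^*$-algebras; your explicit formula for $N$ just spells out what the paper compresses into ``the finiteness of $Q,R$''.
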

	
	\begin{proof}
		We use the endpoint-support argument from
		\cite[proof of Corollary~2.5]{Jolissaint}.
		Only finitely many group elements occur in the polynomials in
		\(\mathcal E\). By \cite[Theorem~4.1]{CFP}, their supports are contained
		in \([\varepsilon,1-\varepsilon]\) for some \(0<\varepsilon<1/2\).
		Lemma~\ref{lem:endpoint-bumps} and the finiteness of \(Q,R\) give
		\(N\) such that, for all \(m,n\geq N\),
		\[
		\operatorname{supp}(z_{j+m})\subseteq(0,\varepsilon)
		\quad(j\in Q),\qquad
		\operatorname{supp}(w_{j+n})\subseteq(1-\varepsilon,1)
		\quad(j\in R).
		\]
		These elements commute with every group element occurring in
		\(\mathcal E\). Hence their canonical unitaries, and their adjoints,
		commute with every element of \(\mathcal E\) and \(\mathcal E^*\).
		Taking generated \(C^*\)-algebras gives the required inclusion.
	\end{proof}
	
	\subsubsection*{Continuous towers}
	
	We use a continuous refinement of the classical Rokhlin lemma. Here finite-coordinate dependence permits endpoint
	localisation, while exact orthogonality removes the residual quotient
	terms in Lemma~\ref{lem:compression}. We include the short regularity
	argument.
	\begin{lemma}\label{lem:tower}
		Let \(\sigma\) be the bilateral coordinate-shift automorphism of
		\(C(\Tz^\Zz)\), and let \(\mu\) be product Haar measure. For every
		integer \(L\geq1\) and \(\eta>0\), there exists a continuous positive
		contraction \(f\in C(\Tz^\Zz)\), depending on finitely many coordinates,
		such that
		\[
		f\sigma^j(f)=0\quad(0<|j|\leq L-1),\qquad
		L\mu(f^2)>1-\eta.
		\]
	\end{lemma}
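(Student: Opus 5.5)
We build a measurable Rokhlin tower from a marker set depending on finitely many coordinates, and then regularise it. The shift on $(\Tz^\Zz,\mu)$ is a Bernoulli shift, hence aperiodic, but we want an explicit construction with exact orthogonality.

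\emph{Marker set.} Fix a large integer $M\gg L$ and a closed arc $U\subseteq\Tz$ of small Haar measure $p$. Define the cylinder set
\[
E=\{x: x_0\in U,\ x_{-1},\dots,x_{-(L-1)}\notin U\},
\]
the set of times at which $U$ is hit after a gap of at least $L-1$. Distinct hits in $\sigma$-translates of $E$ are at least $L$ apart, so $E\cap\sigma^{j}(E)=\emptyset$ for $0<|j|\leq L-1$, with the sign convention adjusted to match $\sigma(q_j)=q_{j+1}$.

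\emph{Tower.} One could use $E$ itself as a base, but $L\mu(E)$ is small. Instead we take the base
\[
B=\{x: \exists\, i\in\{0,\dots,L'\}\ \text{with}\ \sigma^{i}x\in E\ \text{minimal such that } i\equiv 0 \bmod L\},
\]
the standard ``cut the return interval into blocks of length $L$'' construction. Concretely, $B$ is the set of $x$ for which the distance back to the last marker hit, looking at most $K$ coordinates back, is a multiple of $L$ and is followed by at least $L$ further non-marker steps before the next marker. Restricting the look-back and look-ahead to a window of length $K$ keeps $B$ a finite-coordinate cylinder set. The translates $\sigma^jB$ for $0\le j<L$ are then pairwise disjoint. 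The uncovered part consists of three pieces: points with no marker within $K$ steps, which has measure about $(1-p)^K$; the final incomplete block before each marker, which has frequency at most $L$ per return and so measure about $Lp$; and boundary effects. Choosing $p$ small and then $K$ large gives $L\mu(B)>1-\eta/2$.

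\emph{Continuous regularisation.} The set $B$ is determined by finitely many coordinates through conditions of the form $x_i\in U$ or $x_i\notin U$. We replace each such condition by a strict margin: membership in a slightly shrunk arc $U^-$, and non-membership in a slightly enlarged arc $U^+$, on the defining coordinates. Using these, we let $f$ be the product of continuous $[0,1]$-valued bump functions $\chi(x_i)$ that equal $1$ on the shrunk condition set and vanish outside the original condition. Then $\operatorname{supp} f\subseteq B$, so exact orthogonality $f\sigma^j(f)=0$ is inherited from the disjointness of the $\sigma^jB$. Moreover $f$ equals $1$ on a subset $B^-\subseteq B$ whose measure tends to $\mu(B)$ as the margins shrink, since Haar measure of the arc boundaries is zero. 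Hence $\mu(f^2)\ge\mu(B^-)>\mu(B)-\eta/(2L)$, giving $L\mu(f^2)>1-\eta$.

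\emph{Main obstacle.} The key step is the disjointness-in-support argument. We must make sure that the conditions defining $B$ are ``closed-in-open'' compatibly, so that $\operatorname{supp}f\subseteq B$ and yet the disjointness of the translates still holds on the enlarged closed support. We will handle this by proving disjointness for the family of sets defined with $U^-$ replaced by the closed arc $U$ and $U^+$ by its interior. Those sets contain $\operatorname{supp}f$, and the combinatorial gap argument still goes through for them because the gap condition uses only ``in $U$'' and ``not in $U^+$'', which are mutually exclusive.
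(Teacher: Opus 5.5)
Your argument works, but it follows a genuinely different route from the paper.

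\textbf{The paper's route.} The paper treats the shift abstractly. Periodic points lie in the null sets $\{x_0=x_p\}$, so Rokhlin's lemma gives a measurable base $E$. Inner regularity then gives a compact $K\subseteq E$. Disjoint open neighbourhoods of the translates of $K$ are intersected to an open set $O$ whose first $L$ translates are disjoint. Finally, a finite cylinder cover of $K$ inside $O$ and an Urysohn function on a finite-dimensional torus produce $f$.

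\textbf{Your route.} You instead build a Kakutani--Rokhlin tower by hand from markers (a hit of $U$ preceded by $L-1$ non-hits), cutting each gap between consecutive markers into blocks of length $L$. The base is a cylinder set from the outset, exact disjointness is purely combinatorial, and the only regularity needed is that the endpoints of $U$ are Haar-null.

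\textbf{Comparison.} The paper's proof is shorter and uses only aperiodicity of the shift, together with regularity of $\mu$ and the cylinder structure of the topology. Yours avoids Rokhlin's lemma and the regularity arguments, but relies on independence of the coordinates to control marker frequencies.

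\textbf{Points to tighten.}

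First, your first display for $B$ is garbled. The usable definition is: the last marker at or before position $0$ sits at $-kL$ with $kL\leq K$, and no marker occurs at positions $1,\dots,L-1$. With this definition:
\begin{itemize}
\item two block starts at distance $0<i<L$ would share the same last marker, forcing $L\mid i$;
\item the uncovered set lies in $\{\text{no marker in }[-K,0]\}\cup\{\text{a marker in }[1,L-1]\}$.
\end{itemize}
Since marker events on disjoint windows are independent, the uncovered set has measure at most $(1-p(1-p)^{L-1})^{\lfloor K/L\rfloor}+(L-1)p$. So no separate ``boundary effects'' arise.

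Second, $B$ is a finite disjoint union of hit/non-hit patterns on its window, not a single product condition. So $f$ should be the sum, over admissible patterns, of products of coordinate bumps. Disjointness of the patterns keeps $0\leq f\leq 1$.

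Third, your ``main obstacle'' is not one. The identity $f\sigma^j(f)=0$ is pointwise, so it suffices that $\{f\neq 0\}\subseteq B$. This holds because each bump is nonzero only where its original condition holds. The closed support of $f$ need not lie in $B$, and the two-predicate discussion can be dropped.
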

	
	\begin{proof}
		For $L=1$, take $f=1$. By decreasing $\eta$ if necessary, we may
		otherwise suppose $0<\eta<1$.
		Put \(X=\Tz^\Zz\), and let \(S\colon X\to X\) be the coordinate
		shift determined by \(\sigma(g)=g\circ S\).
		For every nonzero \(p\in\Zz\), the set \(\{x:S^px=x\}\) lies
		in the null set \(\{x:x_0=x_p\}\). Thus \(S\) is an aperiodic
		automorphism of the atomless standard probability space \((X,\mu)\).
		Rokhlin's lemma, with error \(\eta/2\), gives
		a measurable set \(E\) such that \(E,S^{-1}E,\ldots,S^{-(L-1)}E\)
		are pairwise disjoint and \(L\mu(E)\geq1-\eta/2>1-\eta\).
		
		By inner regularity choose a compact \(K\subseteq E\) with
		\(L\mu(K)>1-\eta\). Choose pairwise disjoint open neighbourhoods
		\(U_j\supset S^{-j}K\), \(0\leq j<L\), and put
		\(O=\bigcap_{j=0}^{L-1}S^jU_j\). Then \(K\subseteq O\) and
		\(S^{-j}O\subseteq U_j\) for \(0\leq j<L\), so
		\(O\cap S^{-j}O=\varnothing\) for \(1\leq j<L\).
		Applying shifts gives the same disjointness for negative \(j\).
		
		Basic cylinders cover \(K\) inside \(O\). Taking a finite subcover
		and the union of its coordinate sets gives a finite \(Q\subset\Zz\)
		and an open \(V\subseteq\Tz^Q\) with
		\(K\subseteq\pi_Q^{-1}(V)\subseteq O\), where \(\pi_Q\) is the
		coordinate projection. Choose a continuous
		\(\varphi\colon\Tz^Q\to[0,1]\) equal to one on \(\pi_Q(K)\)
		and zero off \(V\), and put \(f=\varphi\circ\pi_Q\).
		Then \(f\) depends on finitely many coordinates and vanishes off
		\(O\), giving \(f\sigma^j(f)=0\) for \(0<|j|<L\). Since
		\(f=1\) on \(K\), we have \(L\mu(f^2)\geq L\mu(K)>1-\eta\).
	\end{proof}
	
	\begin{remark}
		The mass estimate uses product Haar measure. For $L\geq2$,
		orthogonality forces $f$ to vanish at every fixed point of the shift,
		so the estimate fails for invariant point masses when $0<\eta<1$.
	\end{remark}
	
	\subsection{Exact matrix compression}\label{sec:compression}
	
	We combine the endpoint towers with matrix compression to obtain an exact factorisation over $C_\lambda^*(F')$ with arbitrarily small canonical trace deficit. This is analogous to the tracial Rokhlin approximation method in \cite{OsakaPhillips}. We first record the positive
coefficient matrix used in this construction. We first record the positive coefficient matrix used in this construction.
	
	Let $E:C_\lambda^*(F)\to C_\lambda^*(F')$ be the canonical conditional expectation determined on
	generators by
	\begin{equation}
		E(\lambda_g)=\begin{cases}
			\lambda_g & \text{ if } g\in F',\\
			0 & \text{ if } g\not\in F'.
		\end{cases}
	\end{equation}
	Thus $\tau_{F'}\circ E=\tau_F$.
	For a finite set $\Lambda\subseteq\Zz^2$, put
	\begin{equation}\label{eq:phi}
		\Phi_\Lambda(a)\coloneqq[E(W_p^*aW_q)]_{p,q\in\Lambda}.
	\end{equation}
	This map is completely positive: $[W_p^*aW_q]$ is obtained from $a$
	by a row-operator compression, followed by the matrix amplification
	of $E$. Also $\Phi_\Lambda(1)=I$, since distinct $W_p$ have distinct
	endpoint quotient classes. For $a$ as in \eqref{eq:fourier},
	\begin{equation}\label{eq:twistedmatrix}
		\Phi_\Lambda(a)_{pq}=\rho_{-p}(b_{p-q}),
	\end{equation}
	with absent coefficients interpreted as zero. In particular,
	\(0\leq\Phi_\Lambda(a)\leq\|a\|I\) when \(a\geq0\).
	
	The coefficient-matrix map $\Phi_\Lambda$ is the usual finite-corner
	compression for reduced crossed products; see \cite{BrownOzawa}. The proof below
	combines this compression with the Rokhlin tower matrix construction in
	\cite[proof of Theorem~4.1, pp.~655--658]{HWZ}.
	Endpoint localisation provides exact commutation with the relevant
	coefficients, yielding the exact factorisation below.
	
	\begin{samepage}
		\begin{lemma}\label{lem:compression}
			For every $a\in\Cz[F]_+\coloneqq \Cz[F]\cap C_\lambda^*(F)_+$ and $0<\delta<1$, there exist a positive contraction $c\in C_\lambda^*(F')$, an integer $d\geq1$, and a row $y\in M_{1,d}(C_\lambda^*(F))$ such that
			\begin{equation}
				\tau_F(1-c^2)<\delta,\qquad yy^*=cac,\qquad y^*y\in M_d(C_\lambda^*(F'))_+.
			\end{equation}
		\end{lemma}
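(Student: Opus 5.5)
We build $c$ from a two-dimensional Rokhlin tower placed near the endpoints, and $y$ from the square root of the coefficient matrix $\Phi_\Lambda(a)$. The main obstacle is making the cross terms vanish \emph{exactly}, with every error pushed into the trace deficit.

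\textbf{Setup.} Write $a=\sum_{k\in\Zz^2}b_kW_k$ as in \eqref{eq:fourier}. Let $K\geq0$ be such that $b_k\neq0$ implies $|k_1|,|k_2|\leq K$. Fix $L\geq1$, to be chosen large, and put $L'=L+2K+1$. Apply Lemma~\ref{lem:tower} with length $L'$ and a small $\eta$ to get $f\in C(X)$ depending on finitely many coordinates $Q$ and satisfying
\[
f\sigma^j(f)=0\quad(0<|j|\leq L'-1),\qquad L'\mu(f^2)>1-\eta .
\]
Since $\sigma$ preserves $\mu$, replacing $f$ by $\sigma^N(f)$ keeps both properties and moves the coordinate window to $Q+N$.

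\textbf{Localisation.} Let $\Lambda=\{0,\dots,L-1\}^2$ and $\mathcal E=\{\rho_{-p}(b_k):p\in\Lambda,\ |k_i|\leq K\}\subseteq\Cz[F']$, which is finite. By Lemma~\ref{lem:endpoint-localisation}, choose $N$ so large that $\Theta(D_{Q'+m}\otimes D_{Q'+n})$ commutes with $\mathcal E$ for all $m,n\geq N$. Here $Q'=Q-\{0,\dots,L'\}$, so that all the shifts $\rho_p$ used below stay inside the commuting region. Put
\[
h=\Theta\bigl(\sigma^N(f)\otimes\sigma^N(f)\bigr)\in B .
\]
Then $h$ is a positive contraction commuting with $\mathcal E$. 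By the equivariance in Lemma~\ref{lem:endpoint-shift-algebras}, for $j\in\Zz^2$ with $0<\max(|j_1|,|j_2|)\leq L'-1$ we have $h\rho_j(h)=0$, because one tensor factor vanishes.

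\textbf{Definitions.} Set
\[
c=\sum_{p\in\Lambda}\rho_p(h),\qquad X=(W_ph)_{p\in\Lambda}\in M_{1,d}(A),\qquad d=|\Lambda| .
\]
The summands of $c$ are mutually orthogonal positive contractions, so $c$ is a positive contraction in $B$ with $c^2=\sum_p\rho_p(h^2)$. Its trace is
\[
\tau_F(c^2)=L^2\mu(f^2)^2>\Bigl(\tfrac{L(1-\eta)}{L'}\Bigr)^2 .
\]
This exceeds $1-\delta$ once $L$ is large and $\eta$ is small.

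\textbf{Algebraic identity.} Using $\rho_p(h)=W_phW_p^*$, we get
\[
cac=\sum_{p,q}W_ph\,(W_p^*aW_q)\,hW_q^*,\qquad W_p^*aW_q=\sum_k\rho_{-p}(b_k)W_{k-p+q}.
\]
Since $h$ commutes with $\rho_{-p}(b_k)$, we have
\[
h\rho_{-p}(b_k)W_jh=\rho_{-p}(b_k)\,h\rho_j(h)\,W_j .
\]
For $j=k-p+q\neq0$ both components satisfy $|j_i|\leq K+L-1<L'$, so this term vanishes. Only the $j=0$ terms survive, which are exactly $E(W_p^*aW_q)=\Phi_\Lambda(a)_{pq}$. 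Hence
\[
cac=\sum_{p,q}W_ph\,\Phi_\Lambda(a)_{pq}\,hW_q^*=X\,\Phi_\Lambda(a)\,X^*,
\]
where the last step again uses that $h$ commutes with the entries of $\Phi_\Lambda(a)$.

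\textbf{Conclusion.} Since $a\geq0$, the matrix $\Phi_\Lambda(a)$ is positive in $M_d(B)$; put $y=X\,\Phi_\Lambda(a)^{1/2}$. Then $yy^*=cac$, and
\[
y^*y=\Phi_\Lambda(a)^{1/2}\,(X^*X)\,\Phi_\Lambda(a)^{1/2}.
\]
The entries of $X^*X$ are $hW_{q-p}h=h\rho_{q-p}(h)W_{q-p}$, which vanish for $p\neq q$ and equal $h^2\in B$ on the diagonal. Therefore $y^*y\in M_d(B)_+$.

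The delicate bookkeeping is choosing the padding $K$ and the localisation window so that every shifted coefficient $\rho_{-p}(b_k)$ commutes with $h$. This is what makes all the cross terms vanish exactly rather than approximately.
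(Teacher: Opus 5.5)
Your proposal is correct and follows the paper's proof essentially step for step. You use the same product base $h=\Theta(\sigma^N f\otimes\sigma^N f)$ localised at both endpoints, the same $c=\sum_{p\in\Lambda}W_phW_p^*$, exact cancellation of the residual terms via $h\rho_r(h)=0$, and the same factorisation $y=(W_ph)_{p}\,\Phi_\Lambda(a)^{1/2}$.

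The differences are cosmetic. Your tower height and your $Q'$ padding are larger than needed; the $Q'$ padding is unused, since only $h$ itself must commute with the $\rho_{-p}(b_k)$. You also leave $y^*y=\Phi_\Lambda(a)^{1/2}\operatorname{diag}(h^2,\ldots,h^2)\,\Phi_\Lambda(a)^{1/2}$ unsimplified, which spares you the paper's use of $[H,\Phi_\Lambda(a)^{1/2}]=0$.
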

	\end{samepage}
	
	\begin{proof}
		\emph{Step 1: a cutoff of large trace.}
		Choose an integer \(R\geq0\) so that the nonzero coefficients in
		\eqref{eq:fourier} have \(\|k\|_\infty\leq R\). Put
		\[
		m=\max\{2,\lceil8R/\delta\rceil\},\quad L=m+R,\quad
		\eta=\delta/8,\quad \Lambda=\{0,\ldots,m-1\}^2,\quad d=m^2,
		\]
		and let
		\[
		B_0=C^*(1,\rho_{-p}(b_k):p\in\Lambda,\ \|k\|_\infty\leq R).
		\]
		Apply Lemma~\ref{lem:tower} with height $L$ and error $\eta$ to obtain
		$f,g\in C(\Tz^\Zz)$, depending on finite coordinate sets $Q,R_0$,
		respectively. Apply Lemma~\ref{lem:endpoint-localisation} to these sets
		and the finite family of coefficients generating $B_0$. For sufficiently
		large $M,N$, the positive contractions
		\[
		f_-=\Theta(\sigma^M(f)\otimes1),\qquad
		f_+=\Theta(1\otimes\sigma^N(g))
		\]
		commute with $B_0$. The shifts preserve orthogonality and Haar integrals.
		Set $h=f_-f_+$. Equivariance and the product trace formula in
		Lemma~\ref{lem:endpoint-shift-algebras} give
		\begin{equation}\label{eq:baseproperties}
			[h,B_0]=0,\quad h\rho_r(h)=0\ (0<\|r\|_\infty\leq L-1),\quad
			\tau_F(h^2)>(1-\eta)^2/L^2.
		\end{equation}
		For \(r\ne0\), a nonzero coordinate supplies the vanishing factor in
		the middle identity. Define $h_p=W_phW_p^*$ and
		$c=\sum_{p\in\Lambda}h_p$. The $h_p$ are pairwise orthogonal positive
		contractions in $C_\lambda^*(F')$, so $0\leq c\leq1$. Moreover,
		\[
		\tau_F(c^2)=m^2\tau_F(h^2)
		>\left(\frac m{m+R}\right)^2(1-\eta)^2
		\geq(1-\delta/8)^4\geq1-\delta/2>1-\delta.
		\]
		Here we used $R/m\leq\delta/8$ and $(1-t)^4\geq1-4t$ for $0\leq t\leq1$.
		
		\emph{Step 2: cancellation of the residual quotient terms.}
		The tower height $L=m+R$ accounts for both the matrix window and the
		Fourier radius. Each term in the expansion of $cac$ has the form
		\begin{equation}\label{eq:residual}
			h_p b_kW_kh_q
			=W_ph\rho_{-p}(b_k)W_{k+q-p}hW_q^*.
		\end{equation}
		Its residual exponent satisfies
		\(\|k+q-p\|_\infty\leq R+m-1=L-1\). If it is nonzero, commute
		the coefficient past \(h\) and use
		\(hW_rh=h\rho_r(h)W_r=0\). If it is zero, the term is
		\(W_ph^2\rho_{-p}(b_{p-q})W_q^*\).
		
		\emph{Step 3: the positive matrix factorisation.}
		Set \(X=\Phi_\Lambda(a)\). Its entries lie in \(B_0\), and its
		positivity in \(M_d(C_\lambda^*(F'))\) implies positivity in \(M_d(B_0)\) by
		spectral permanence. Summing the surviving terms and using
		\eqref{eq:twistedmatrix} gives
		\begin{equation}\label{eq:compressedmatrix}
			cac=\sum_{p,q\in\Lambda}W_ph^2X_{pq}W_q^*.
		\end{equation}
		Define
		\[
		V=(W_p)_{p\in\Lambda},\qquad
		H=\operatorname{diag}(h,\ldots,h),\qquad y=VHX^{1/2}.
		\]
		Here \(V,y\in M_{1,d}(C_\lambda^*(F))\) and \([H,X]=0\), since \([h,B_0]=0\).
		For \(p,q\in\Lambda\), we have \(\|q-p\|_\infty\leq m-1\leq L-1\),
		so \eqref{eq:baseproperties} implies
		\begin{equation}\label{eq:roworthogonality}
			(HV^*VH)_{pq}=hW_{q-p}h=\delta_{pq}h^2.
		\end{equation}
		Consequently,
		\begin{equation}\label{eq:rowfactorization}
			\begin{aligned}
				yy^*&=VHXHV^*=cac,\\
				y^*y&=X^{1/2}HV^*VHX^{1/2}=H^2X\in M_d(C_\lambda^*(F'))_+.
			\end{aligned}
		\end{equation}
	\end{proof}
	
	\subsection{Quasitrace rigidity}\label{sec:rigidity}
	
	We combine Lemma~\ref{lem:compression} with a small-remainder
	estimate to prove quasitrace rigidity without an exactness assumption.
	This strategy is related to the quasitrace argument in
	\cite[Proposition~4.6]{FuLin}.
	
	The following estimate is a quantitative consequence of the quasitrace
	inequalities, in particular
	\cite[Lemma 3.5(1)]{Haagerup}.
	\begin{lemma}
		\label{lem:remainder}
		Let \(q\) be a normalized quasitrace and \(t\) a tracial state on a
		unital \(C^*\)-algebra \(D\). Suppose \(a\in D_+\), \(c\in D\),
		\(0\leq c\leq1\),
		\(q(cac)=t(cac)\), and \(q(1-c^2)=t(1-c^2)=\beta\). Then,
		\[
		|q(a)-t(a)|\leq2\|a\|\sqrt\beta.
		\]
	\end{lemma}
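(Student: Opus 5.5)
The idea is to compare both $q(a)$ and $t(a)$ with the common value $s:=q(cac)=t(cac)$. Each of them should lie in an interval determined only by $s$, $\|a\|$ and $\beta$, and that interval should have length at most $2\|a\|\sqrt\beta$. Every tracial state is a normalised quasitrace, so it is enough to establish the two-sided bounds for an arbitrary normalised quasitrace $q'$ with $q'(cac)=s$ and $q'(1-c^2)=\beta$, and then apply them to $q'=q$ and $q'=t$.

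\emph{Lower bound.} Put $x=a^{1/2}c$. By (Q1), $q'(cac)=q'(x^*x)=q'(xx^*)=q'(a^{1/2}c^2a^{1/2})$. Since $c^2\leq1$, we have $a^{1/2}c^2a^{1/2}\leq a$, so order preservation (a) gives $s\leq q'(a)$.

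\emph{Upper bounds.} Split
\[
a=a^{1/2}c^2a^{1/2}+a^{1/2}(1-c^2)a^{1/2}
\]
as a sum of two positive elements. Using (Q1) with $y=(1-c^2)^{1/2}a^{1/2}$ and then (a), the second summand satisfies
\[
q'\bigl(a^{1/2}(1-c^2)a^{1/2}\bigr)=q'\bigl((1-c^2)^{1/2}a(1-c^2)^{1/2}\bigr)\leq\|a\|\,q'(1-c^2)=\|a\|\beta .
\]
Square-root subadditivity \eqref{eq:sqrt} then yields
\[
\sqrt{q'(a)}\leq\sqrt s+\sqrt{\|a\|\beta}.
\]
Independently, $a\leq\|a\|1$ gives $q'(a)\leq\|a\|$.

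\emph{Conclusion.} Both $q(a)$ and $t(a)$ lie in
\[
\Bigl[s,\ \min\bigl\{\|a\|,\ (\sqrt s+\sqrt{\|a\|\beta})^2\bigr\}\Bigr],
\]
so $|q(a)-t(a)|\leq\min\{\|a\|-s,\ 2\sqrt{s\|a\|\beta}+\|a\|\beta\}$. Scale so that $\|a\|=1$ (the case $a=0$ is trivial). The first expression decreases in $s$ and the second increases, so the minimum is largest where they are equal. That happens when $1-\beta=s+2\sqrt{s\beta}$, that is, when $(\sqrt s+\sqrt\beta)^2=1$, so $\sqrt s=1-\sqrt\beta$. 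There the common value is $1-(1-\sqrt\beta)^2=2\sqrt\beta-\beta\leq2\sqrt\beta$. Undoing the scaling gives $|q(a)-t(a)|\leq2\|a\|\sqrt\beta$.

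\emph{Main obstacle.} The crude bound $2\sqrt{s\|a\|\beta}+\|a\|\beta$ on its own leaves an extra $\|a\|\beta$ term. The step to get right is combining it with the trivial cap $q'(a)\leq\|a\|$ and carrying out the one-variable optimisation in $s$.
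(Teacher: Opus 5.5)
Your proof is correct and follows essentially the same route as the paper. It uses the same splitting $a=a^{1/2}c^2a^{1/2}+a^{1/2}(1-c^2)a^{1/2}$, the bound $q\bigl(a^{1/2}(1-c^2)a^{1/2}\bigr)\leq\|a\|\beta$ via (Q1) and order preservation, square-root subadditivity, and the cap $q(a)\leq\|a\|$. The only difference is bookkeeping: the paper bounds $q(a)-q(cac)$ by factoring the difference of squares with $\sqrt{q(a)}+\sqrt{q(cac)}\leq2\sqrt{\|a\|}$, and uses exact additivity of $t$ to get $0\leq t(a)-t(cac)\leq\|a\|\beta$, whereas you treat $t$ as a quasitrace and optimise over $s$, which gives the slightly sharper bound $\|a\|(2\sqrt\beta-\beta)$.
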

	
	\begin{proof}
		Write \(M=\|a\|\), \(d_0=1-c^2\),
		\(b=a^{1/2}c^2a^{1/2}\), and \(r=a^{1/2}d_0a^{1/2}\).
		Then \(a=b+r\), with \(b,r\geq0\). The quasitrace identity gives
		\[
		q(b)=q(cac),\qquad
		q(r)=q(d_0^{1/2}ad_0^{1/2})\leq Mq(d_0)=M\beta.
		\]
		Order preservation and \eqref{eq:sqrt} imply
		\[
		0\leq\sqrt{q(a)}-\sqrt{q(b)}\leq\sqrt{M\beta}.
		\]
		Since \(q(a),q(b)\leq M\), multiplying by
		\(\sqrt{q(a)}+\sqrt{q(b)}\leq2\sqrt M\) gives
		\(0\leq q(a)-q(cac)\leq2M\sqrt\beta\). For the trace, additivity gives
		\(0\leq t(a)-t(cac)=t(r)\leq M\beta\). Subtracting these scalar
		differences and using \(q(cac)=t(cac)\) gives
		\(-M\beta\leq q(a)-t(a)\leq2M\sqrt\beta\).
		Since \(0\leq\beta\leq1\), the stated bound follows, including when
		\(M=0\) or \(\beta=0\).
	\end{proof}
	
	\begin{proposition}
		\label{prop:rigidity}
		If $q\in\QT_1(C_\lambda^*(F))$ satisfies $q|_{C_\lambda^*(F')}=\tau_{F'}$, then $q=\tau_F$.
	\end{proposition}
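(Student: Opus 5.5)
The plan is to show $q(a)=\tau_F(a)$ first for every $a\in\Cz[F]_+$, and then pass to all of $C_\lambda^*(F)$ by density and continuity.

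Fix $a\in\Cz[F]_+$ and $0<\delta<1$. Lemma~\ref{lem:compression} provides a positive contraction $c\in C_\lambda^*(F')$, an integer $d\geq1$ and a row $y\in M_{1,d}(C_\lambda^*(F))$ with $yy^*=cac$, $y^*y\in M_d(C_\lambda^*(F'))_+$ and $\tau_F(1-c^2)<\delta$.

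Let $q_d$ be the matrix amplification of $q$ from fact~(b) of Subsection~\ref{subsec:quasitraces}. Restricted to $M_d(C_\lambda^*(F'))$, $q_d$ is a quasitrace amplifying $q|_{C_\lambda^*(F')}=\tau_{F'}$. The trace $\tau_{F'}\otimes\Tr$ is another such amplification, so uniqueness in~(b), applied to the algebra $C_\lambda^*(F')$, gives $q_d=\tau_{F'}\otimes\Tr$ there. Here I need that the restriction of $q_d$ to $M_d(C_\lambda^*(F'))$ is itself a quasitrace with the right corner values; this is immediate from the definitions, since (Q4) passes to subalgebras.

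Now let $Y\in M_d(C_\lambda^*(F))$ be the matrix with first row $y$ and all other rows zero. Then $YY^*=cac\otimes e_{11}$, and $Y^*Y$ is the $d\times d$ matrix $y^*y$. Axiom (Q1) for $q_d$ gives
\[
q(cac)=q_d(YY^*)=q_d(Y^*Y)=(\tau_{F'}\otimes\Tr)(y^*y).
\]
The same computation for the trace $\tau_F$ and its amplification gives
\[
\tau_F(cac)=(\tau_F\otimes\Tr)(y^*y)=(\tau_{F'}\otimes\Tr)(y^*y),
\]
so $q(cac)=\tau_F(cac)$. Since $1-c^2\in C_\lambda^*(F')$, we also have $q(1-c^2)=\tau_{F'}(1-c^2)=\tau_F(1-c^2)=:\beta<\delta$.

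Lemma~\ref{lem:remainder}, with $t=\tau_F$, then gives $|q(a)-\tau_F(a)|\leq2\|a\|\sqrt\delta$. Letting $\delta\to0$ yields $q(a)=\tau_F(a)$.

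For a general positive $x\in C_\lambda^*(F)$, choose $p\in\Cz[F]$ with $\|x^{1/2}-p\|$ small. Then $p^*p\in\Cz[F]_+$ and $p^*p\to x$ in norm, so norm continuity of $q$ and $\tau_F$ gives $q=\tau_F$ on $C_\lambda^*(F)_+$. Every selfadjoint element is a difference of two commuting positive elements, and $q$ is linear on the abelian algebra they generate, by (Q3). Hence $q=\tau_F$ on selfadjoint elements, and (Q2) extends this to all of $C_\lambda^*(F)$.

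The only step needing care is the identification $q_d=\tau_{F'}\otimes\Tr$ on $M_d(C_\lambda^*(F'))$ via uniqueness of amplifications. Everything else is bookkeeping with the earlier lemmas.
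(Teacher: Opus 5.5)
Your proposal is correct and follows the paper's proof essentially step for step: you compress via Lemma~\ref{lem:compression}, identify $q_d=\tau_{F'}\otimes\Tr$ on $M_d(C_\lambda^*(F'))$ by uniqueness of amplifications, and swap $YY^*$ for $Y^*Y$, then finish with Lemma~\ref{lem:remainder} and the density/continuity extension. Your explicit check that the restriction of $q_d$ to $M_d(C_\lambda^*(F'))$ is again a quasitrace, so that uniqueness applies there, spells out a point the paper leaves implicit.
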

	\begin{proof}
		Fix \(a\in\Cz[F]_+\) and \(0<\delta<1\). Apply
		Lemma~\ref{lem:compression} to obtain \(c\), \(d\), and \(y\).
		Let \(Y\in M_d(C_\lambda^*(F))\) have first row \(y\) and all other rows zero.
		Then
		\[
		YY^*=\operatorname{diag}(cac,0,\ldots,0),\qquad
		Y^*Y=y^*y\in M_d(C_\lambda^*(F'))_+.
		\]
		Let $q_d$ be the compatible unnormalised matrix amplification of $q$.
		Its restriction to $M_d(C_\lambda^*(F'))$ extends $q|_{C_\lambda^*(F')}=\tau_{F'}$, so uniqueness
		of amplification, recalled in Subsection~\ref{subsec:quasitraces}, gives
		$q_d|_{M_d(C_\lambda^*(F'))}=\tau_{F'}\otimes\Tr$. Hence
		\[
		\begin{aligned}
			q(cac)&=q_d(YY^*)=q_d(Y^*Y)\\
			&=(\tau_{F'}\otimes\Tr)(Y^*Y)
			=(\tau_F\otimes\Tr)(YY^*)=\tau_F(cac).
		\end{aligned}
		\]
		Also \(1-c^2\in C_\lambda^*(F')\), so
		\[
		q(1-c^2)=\tau_F(1-c^2)<\delta.
		\]
		Lemma~\ref{lem:remainder} yields
		$|q(a)-\tau_F(a)|\leq2\|a\|\sqrt\delta$.
		Letting $\delta\downarrow0$ gives $q(a)=\tau_F(a)$ for every
		\(a\in\Cz[F]_+\).
		
		For \(x\in C_\lambda^*(F)_+\), approximate \(x^{1/2}\) in norm by finitely supported elements
		\(s_j\). Then \(s_j^*s_j\to x\), so norm continuity extends the equality
		to \(C_\lambda^*(F)_+\). Positive and negative parts, followed by real and imaginary
		parts, extend it to all of \(C_\lambda^*(F)\).
	\end{proof}
	
	\begin{proposition}\label{prop:uniqueQT}
		The tracial state $\tau_F$ is the unique normalised quasitrace on $C_\lambda^*(F)$.
	\end{proposition}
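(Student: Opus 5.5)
The plan is to assemble three results already established above. First, I would check that $\tau_F$ is itself a normalised quasitrace on $C_\lambda^*(F)$. Every tracial state satisfies (Q1)--(Q3). For (Q4), I would take $\tilde q=\tau_F\otimes\Tr$ on $M_2(C_\lambda^*(F))$, which is a positive tracial functional and restricts correctly on the $e_{11}$ corner. So $\QT_1(C_\lambda^*(F))\neq\varnothing$ and contains $\tau_F$.

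For uniqueness, fix an arbitrary $q\in\QT_1(C_\lambda^*(F))$ and restrict it to $C_\lambda^*(F')$, viewed as a unital subalgebra via the isometric inclusion recorded in the preliminaries. The restriction is again a normalised quasitrace. Conditions (Q1)--(Q3) are inherited directly, since abelian subalgebras of $C_\lambda^*(F')$ are abelian in $C_\lambda^*(F)$. For (Q4), I would restrict $\tilde q$ to $M_2(C_\lambda^*(F'))\subseteq M_2(C_\lambda^*(F))$. Proposition~\ref{prop:restriction} then says $q|_{C_\lambda^*(F')}$ is a tracial state.

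By nonamenability of $F$ \cite{OpenAINonamenability}, Theorem~\ref{thm:LBMB} and the remark following it (via \cite[Theorem~1.3]{BKKO}), $C_\lambda^*(F')$ has the unique tracial state $\tau_{F'}$. Hence $q|_{C_\lambda^*(F')}=\tau_{F'}$, and Proposition~\ref{prop:rigidity} gives $q=\tau_F$.

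All the substantive work lies in Propositions~\ref{prop:restriction} and~\ref{prop:rigidity}, so this step is essentially bookkeeping. The only point needing care is that the restriction of a quasitrace to a unital $C^*$-subalgebra is still a quasitrace, in particular the $2$-quasitrace condition (Q4). This is immediate from the definition, because the witnessing function on $M_2$ restricts along the inclusion of $M_2$ of the subalgebra.
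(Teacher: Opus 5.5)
Your proposal is correct and follows the paper's proof: restrict $q$ to $C_\lambda^*(F')$, use Proposition~\ref{prop:restriction} and the uniqueness of the trace on $C_\lambda^*(F')$ to get $q|_{C_\lambda^*(F')}=\tau_{F'}$, then conclude with Proposition~\ref{prop:rigidity}. Your extra checks that $\tau_F$ is itself a normalised quasitrace and that restricting a quasitrace to a unital subalgebra preserves (Q1)--(Q4) are sound; the paper leaves both implicit.
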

	
	\begin{proof}
		Let $q\in\QT_1(C_\lambda^*(F))$. Proposition~\ref{prop:restriction} makes $q|_{C_\lambda^*(F')}$
		a tracial state, so $q|_{C_\lambda^*(F')}=\tau_{F'}$ by the uniqueness recorded after
		Theorem~\ref{thm:LBMB}. Proposition~\ref{prop:rigidity} now gives
		$q=\tau_F$.
	\end{proof}

	\subsection{Proof of selflessness}
	
	\begin{proof}[Proof of Theorem~\ref{thm:selfless}]
		The algebra $C_\lambda^*(F)$ is unital, simple and has a unique faithful tracial state $\tau_F$ by
		Theorem~\ref{thm:LBMB} (Le Boudec--Matte Bon) and the observation following it. 
		By Theorem~\ref{thm:Z-stability} and
		\cite[Proposition~5.2]{APTV}, $C_\lambda^*(F)$ has strict comparison of positive
		elements by lower semicontinuous $2$-quasitraces. The underlying
		almost-unperforation result is due to R\o rdam
		\cite[Theorem~4.5]{Rordam}. Since $C_\lambda^*(F)$ is simple and unital, it
		suffices to test this comparison on normalised quasitraces
		\cite[Paragraph~3.4]{APTV}; the correspondence with functionals on
		the Cuntz semigroup is given in \cite[Proposition~4.2]{ERS}.
		Proposition~\ref{prop:uniqueQT} therefore gives strict comparison
		by $\tau_F$ in the sense of
		Subsection~\ref{subsec:comparison-selflessness}.
		Robert's theorem \cite[Theorem~1.1]{RobertComparison}
		therefore applies and shows that $(C_\lambda^*(F),\tau_F)$ is selfless.
	\end{proof}


\begin{thebibliography}{99}

		\bibitem{AGKP25} T. Amrutam, D. Gao, S. Kunnawalkam Elayavalli and G. Patchell,
		\emph{Strict comparison in reduced group $C^*$-algebras},
		Invent. Math. \textbf{242} (2025), no.~3, 639--657.
		
		\bibitem{APTV}
		R. Antoine, F. Perera, H. Thiel, and E. Vilalta,
		\emph{Pure \(C^*\)-algebras},
		arXiv:2406.11052v3, \url{https://arxiv.org/abs/2406.11052v3}.
		
		\bibitem{Arimoto}
		R. Arimoto,
		\emph{\(C^*\)-selflessness of groups of homeomorphisms},
		arXiv:2609.29399v2, \url{https://arxiv.org/abs/2609.29399v2}.

        \bibitem{AFS26}
        G. Arzhantseva and M. Finn-Sell,
        \emph{Lacunary hyperbolic groups with fast injectivity radius growth and enough loxodromic elements are selfless},
        arXiv:2606.20456, \url{https://arxiv.org/abs/2606.20456}.
		
		\bibitem{BF26} A. Basu and F. Flores,
		\emph{Selfless inclusions arising from commensurator groups of hyperbolic groups},
		arXiv:2605.12737v1 (2026), \url{https://arxiv.org/abs/2605.12737v1}.
		
		\bibitem{Belk}
		J. M. Belk,
		\emph{Thompson's group \(F\)},
		Ph.D. thesis, Cornell University, 2004,
		\href{https://arxiv.org/abs/0708.3609}{arXiv:0708.3609}.
		
		\bibitem{BS16} R. Bieri and R. Strebel, \emph{On groups of PL-homeomorphisms of the real line}. Mathematical Surveys and Monographs, 215. American Mathematical Society, Providence, RI, 2016.
		
		\bibitem{BH} B. Blackadar and D. Handelman,
		\emph{Dimension functions and traces on \(C^*\)-algebras},
		J. Funct. Anal. \textbf{45} (1982), no.~3, 297--340.
		
		\bibitem{BleakJuschenko} C. Bleak and K. Juschenko,
		\emph{Ideal structure of the C*-algebra of R. Thompson's group $T$},
		in \emph{Topological Methods in Group Theory},
		London Math. Soc. Lecture Note Ser. \textbf{451},
		Cambridge University Press, Cambridge, 2018, 46--66.
		
		\bibitem{BKKO} E. Breuillard, M. Kalantar, M. Kennedy and N. Ozawa,
		\emph{\(C^*\)-simplicity and the unique trace property for discrete groups},
		Publ. Math. Inst. Hautes \'Etudes Sci. \textbf{126} (2017), 35--71.
		
		\bibitem{Brin1999Ubiquity} M. G. Brin,
		\emph{The ubiquity of Thompson's group~$F$ in groups of piecewise
			linear homeomorphisms of the unit interval},
		J. London Math. Soc. (2) \textbf{60} (1999), no.~2, 449--460.
		
		\bibitem{BBLS} K. A. Brix, C. Bruce, K. Li and E. Scarparo,
		\emph{Maximal ideals of reduced group C*-algebras and Thompson's groups},
		Trans. Amer. Math. Soc. \textbf{379} (2026), no.~6, 4307--4321.
		
		\bibitem{BrownOzawa} N. P. Brown and N. Ozawa,
		\emph{$C^*$-algebras and finite-dimensional approximations},
		Graduate Studies in Mathematics \textbf{88}, American Mathematical Society,
		Providence, RI, 2008.
		
		\bibitem{CFP} J. W. Cannon, W. J. Floyd and W. R. Parry,
		\emph{Introductory notes on Richard Thompson's groups},
		Enseign. Math. (2) \textbf{42} (1996), no.~3--4, 215--256.

        \bibitem{CJM26} M. Chaudkhari, K. Juschenko and K. Muliarchyk,
        \emph{Thompson's groups are sofic},
        arXiv:2610.25415v1 (2026),
        \url{https://arxiv.org/abs/2610.25415v1}.
		
		\bibitem{ChoiEffros} M.-D. Choi and E. G. Effros,
		\emph{The completely positive lifting problem for \(C^*\)-algebras},
		Ann. of Math. (2) \textbf{104} (1976), no.~3, 585--609.

        \bibitem{Connes} A. Connes,
        \emph{Classification of injective factors. Cases $\mathrm{II}_1$, $\mathrm{II}_\infty$, $\mathrm{III}_\lambda$, $\lambda\ne1$}, Ann. of Math. (2) \textbf{104} (1976), no.~1, 73--115.
		
		\bibitem{DadarlatToms} M. Dadarlat and A. S. Toms,
		\emph{$\mathcal Z$-stability and infinite tensor powers of $C^*$-algebras},
		Adv. Math. \textbf{220} (2009), no.~2, 341--366.
		
		\bibitem{DudkoMedynets} A. Dudko and K. Medynets,
		\emph{Finite factor representations of Higman--Thompson groups},
		Groups Geom. Dyn. \textbf{8} (2014), no.~2, 375--389.

		
        \bibitem{Effros} E. G. Effros,
		\emph{Property {$\Gamma $} and inner amenability},
		Proc. Amer. Math. Soc. \textbf{47} (1975), 483--486.
        

		\bibitem{ERS} G. A. Elliott, L. Robert and L. Santiago,
		\emph{The cone of lower semicontinuous traces on a \(C^*\)-algebra},
		Amer. J. Math. \textbf{133} (2011), no.~4, 969--1005.


		\bibitem{FuLin} X. Fu and H. Lin,
		\emph{Nonamenable simple \(C^*\)-algebras with tracial approximation},
		Forum Math. Sigma \textbf{10} (2022),~e14, 1--50.
		
		
		\bibitem{GHV} E. Gardella, I. Hirshberg and A. Vaccaro,
		\emph{Strongly outer actions of amenable groups on \(\mathcal Z\)-stable
			nuclear \(C^*\)-algebras},
		J. Math. Pures Appl. (9) \textbf{162} (2022), 76--123.
		
		\bibitem{GL} E. Gardella and M. Lupini,
		\emph{Group amenability and actions on \(\mathcal Z\)-stable
			\(C^*\)-algebras},
		Adv. Math. \textbf{389} (2021),~107931.
		
		\bibitem{HaagerupHaagerupRamirez}
		S. Haagerup, U. Haagerup and M. Ramirez-Solano,
		\emph{A computational approach to the Thompson group~$F$},
		Internat. J. Algebra Comput. \textbf{25} (2015), no.~3, 381--432.
		
		\bibitem{Haagerup} U. Haagerup,
		\emph{Quasitraces on exact \(C^*\)-algebras are traces},
		C. R. Math. Rep. Acad. Sci. Canada \textbf{36} (2014), no.~2--3, 67--92.
		
		\bibitem{HaagerupOlesen} U. Haagerup and K. K. Olesen,
		\emph{Non-inner amenability of the Thompson groups $T$ and $V$},
		J. Funct. Anal. \textbf{272} (2017), no.~11, 4838--4852.
		
		\bibitem{HaagerupPicioroaga} U. Haagerup and G. Picioroaga,
		\emph{New presentations of Thompson's groups and applications},
		J. Operator Theory \textbf{66} (2011), no.~1, 217--232.
		
		\bibitem{HWZ} I. Hirshberg, W. Winter and J. Zacharias,
		\emph{Rokhlin dimension and \(C^*\)-dynamics},
		Comm. Math. Phys. \textbf{335} (2015), no.~2, 637--670.
		
		\bibitem{JiangSu} X. Jiang and H. Su,
		\emph{On a simple unital projectionless $C^*$-algebra},
		Amer. J. Math. \textbf{121} (1999), no.~2, 359--413.
		
		\bibitem{Jolissaint} P. Jolissaint,
		\emph{Central sequences in the factor associated with Thompson's group~\(F\)},
		Ann. Inst. Fourier \textbf{48} (1998), no.~4, 1093--1106.
		
		\bibitem{Jolissaint2005} P. Jolissaint,
		\emph{Operator algebras related to Thompson's group~$F$},
		J. Aust. Math. Soc. \textbf{79} (2005), no.~2, 231--241.
		
		\bibitem{KalantarScarparo} M. Kalantar and E. Scarparo,
		\emph{Boundary maps, germs and quasi-regular representations},
		Adv. Math. \textbf{394} (2022),~108130.
		
		\bibitem{Kesten} H. Kesten,
		\emph{Full Banach mean values on countable groups},
		Math. Scand. \textbf{7} (1959), 146--156.
		
		\bibitem{KimLodha}
		Sang-hyun Kim and Y. Lodha,
		\emph{A note on the proof of nonamenability of~F},
		arXiv:2610.18360v1 (2026),
		\url{https://arxiv.org/abs/2610.18360v1}.
		
		\bibitem{KirchbergRordamCharacters} E. Kirchberg and M. R{\o}rdam,
		\emph{When central sequence $C^*$-algebras have characters},
		Internat. J. Math. \textbf{26} (2015), no.~7,~1550049, 32~pp.
		
		\bibitem{LanceNuclear} C. Lance,
		\emph{On nuclear C*-algebras},
		J. Funct. Anal. \textbf{12} (1973), no.~2, 157--176.
		
		\bibitem{LLV26} C. Le Bars, E. Leibtag and I. Vigdorovich,
		\emph{Selflessness, MIF and opposition in groups acting on exotic buildings},
		arXiv:2609.17129v1 (2026), \url{https://arxiv.org/abs/2609.17129v1}.
		
		\bibitem{LBMB} A. Le Boudec and N. Matte Bon,
		\emph{Subgroup dynamics and \(C^*\)-simplicity of groups of homeomorphisms},
		Ann. Sci. \'Ec. Norm. Sup\'er. (4) \textbf{51} (2018), no.~3, 557--602.
		
		\bibitem{Ocneanu} A. Ocneanu,
		\emph{Actions of discrete amenable groups on von Neumann algebras},
		Lecture Notes in Math. \textbf{1138}, Springer-Verlag, Berlin, 1985.
		
		\bibitem{OpenAINonamenability}
		OpenAI,
		\emph{Thompson's group \(F\) is nonamenable},
		OpenAI Math Release preprint
		\href{https://github.com/openai/math/blob/1581d4cbdd69a0faf34b2089614fe2a814316a6a/preprints/Thompsons-group-F-is-nonamenable-September-23-2026/paper.pdf}{OAI:Thompsons-group-F-is-nonamenable-September-23-2026},
		2026.
		
		\bibitem{OpenAICharacter}
		OpenAI,
		\emph{The Kirchberg--R\o rdam character criterion},
		OpenAI Math Release preprint \href{https://github.com/openai/math/blob/1581d4cbdd69a0faf34b2089614fe2a814316a6a/preprints/The-Kirchberg-Rordam-character-criterion-September-25-2026/paper.pdf}{OAI:The-Kirchberg-Rordam-character-criterion-September-25-2026},
		2026.
		
		\bibitem{OsakaPhillips} H. Osaka and N. C. Phillips,
		\emph{Stable and real rank for crossed products by automorphisms
			with the tracial Rokhlin property},
		Ergodic Theory Dynam. Systems \textbf{26} (2006), no.~5, 1579--1621.
		
		\bibitem{Ozawa26} N. Ozawa,
		\emph{Proximality and selflessness for group $C^*$-algebras},
		arXiv:2508.07938 (2026), \url{https://arxiv.org/abs/2508.07938}.
		
		\bibitem{RobertSelfless} L. Robert,
		\emph{Selfless \(C^*\)-algebras},
		Adv. Math. \textbf{478} (2025),~110409.
		
		\bibitem{RobertComparison}
		L. Robert,
		\emph{Strict comparison and selflessness},
		arXiv:2609.15776v2, 23 September 2026.
		\url{https://arxiv.org/abs/2609.15776v2}.
		
		\bibitem{Rordam} M. R\o rdam,
		\emph{The stable and the real rank of \(\mathcal Z\)-absorbing
			\(C^*\)-algebras},
		Internat. J. Math. \textbf{15} (2004), no.~10, 1065--1084.
		
		\bibitem{Rybak26} E. Rybak,
		\emph{Boundary dynamics, triple transitivity, and mixed identities in
			weakly hyperbolic groups},
		arXiv:2605.14159v3 (2026), \url{https://arxiv.org/abs/2605.14159v3}.
		
		\bibitem{SatoAmenable} Y. Sato,
		\emph{Actions of amenable groups and crossed products of
			\(\mathcal Z\)-absorbing \(C^*\)-algebras},
		in \emph{Operator Algebras and Mathematical Physics},
		Adv. Stud. Pure Math. \textbf{80}, Math. Soc. Japan, Tokyo, 2019, 189--210.

		
		\bibitem{STW26} C. Schafhauser, A. Tikuisis and S. White,
		\emph{Nuclear $C^*$-algebras: 99 problems},
		M\"unster J. Math. (to appear), 2026,
		special volume dedicated to the memory of Eberhard Kirchberg.
		\url{https://arxiv.org/abs/2506.10902v2}.
		
		\bibitem{SzaboSSA} G. Szab\'o,
		\emph{Strongly self-absorbing \(C^*\)-dynamical systems},
		Trans. Amer. Math. Soc. \textbf{370} (2018), no.~1, 99--130.
		Corrigendum: Trans. Amer. Math. Soc. \textbf{373} (2020), no.~10,
		7527--7531.
		
		\bibitem{SzaboSI} G. Szab\'o,
		\emph{Equivariant property (SI) revisited},
		Anal. PDE \textbf{14} (2021), no.~4, 1199--1232.
		
		\bibitem{TomsWinter} A. S. Toms and W. Winter,
		\emph{Strongly self-absorbing \(C^*\)-algebras},
		Trans. Amer. Math. Soc. \textbf{359} (2007), no.~8, 3999--4029.
		
		\bibitem{Vig25} I. Vigdorovich,
		\emph{Structural properties of reduced $C^*$-algebras associated with higher-rank lattices},
		arXiv:2503.12737 (2025), \url{https://arxiv.org/abs/2503.12737}.
		
		\bibitem{Vig26} I. Vigdorovich,
		\emph{Selfless reduced $C^*$-algebras of linear groups},
		Proc. Lond. Math. Soc. (3) \textbf{133} (2026), no.~1,~e70180.


\end{thebibliography}
\end{document}